\newcommand{\MYhref}[3][blue]{\href{#2}{\color{#1}{#3}}}
\numberwithin{equation}{section}
\numberwithin{figure}{section}
\newcommand\qedsymbol{\hbox{$\Box$}}
\newcommand\qed{\relax\ifmmode\Box\else
  {\unskip\nobreak\hfil\penalty50\hskip1em\null\nobreak\hfil\qedsymbol
  \parfillskip=\z@\finalhyphendemerits=0\endgraf}\fi}
\newenvironment{proof-of}[1][{}]{\par\noindent \textbf{Proof of} {#1}. }{\qed}
\newenvironment{proof}[0]{\par\noindent \textbf{Proof}.}{\qed}
\newcommand{\W}{\mathsf{W}} 
\newcommand{\RP}{\mathsf{RP}} 
\newcommand{\PSL}{\mathrm{PSL}}
\newcommand{\bbZ}{{\mathbb Z}}
\renewcommand{\mod}{\,\mathrm{mod}\,}
\date{}
\newtheorem{thm}{Theorem}[section]
\newtheorem{defi}[thm]{Definition}
\newtheorem{lem}[thm]{Lemma}
\newtheorem{cor}[thm]{Corollary}
\newtheorem{prop}[thm]{Proposition}
\newtheorem{nota}[thm]{Notation}
\newtheorem{remark}[thm]{Remark}
\DeclareMathOperator{\rad}{rad}
\title{On the Replacement Property for $\PSL(2,p)$}
\author{David Cueto Noval, Aidan A. Lorenz and Baran Zadeoğlu}
\date{}
\begin{document}

\large

\maketitle

\begin{abstract}
The replacement property (or Steinitz Exchange Lemma) for vector spaces has a natural analog for finite groups and their generating sets. For the special case of the groups $\PSL(2,p)$, where $p$ is a prime larger than 5, first partial results 
concerning the replacement property were published by Benjamin Nachman \cite{Nachman}. Second partial results were published by Lam \cite{Lam}.
The main goal of this paper is to provide a complete answer for $\PSL(2,p)$. 

\end{abstract}

\section{Introduction}
\label{sec:intro}

There is an ongoing effort to create a theory for groups and their generating sequences, analogous to the theory of vector spaces and their respective bases; see \cite{ColThesis}, \cite{Lam}, \cite{Lucchini}, \cite{Nachman}. 
In detail, for a given group $G$, a sequence $s = (g_{1}, ..., g_{n}) \in G^n$ such that $G$ is generated by the $\{g_i\}_{i=1}^n$
is called a \emph{generating sequence of length $n$}; if no proper subsequence of $s$ generates $G$, then $s$
is called \emph{irredundant}. 
The largest possible length of an irredundant generating sequence of $G$ will be denoted by $m(G)$. 
The group $G$ is said to satisfy the \emph{replacement property} (abbr. $\RP$) if any $1 \neq g \in G$
can replace an element in all irredundant generating sequences of length $m(G)$ to yield
a new generating sequence of $G$. 
This property, an obvious analog of the Steinitz Exchange Lemma, does not generally hold for groups. Note that in the case of groups, the new generating sequence need not be irredundant. This definition is motivated further below.

This paper focuses on the groups $\PSL(2,p)$, where $p$ is a prime number $> 5$.  
Our main tool is an analysis of the maximal subgroups $\PSL(2,p)$ and their intersections. 
The following theorem summarizes our findings. 

\begin{thm}
\label{Theorem1}
The groups $\PSL(2,p)$ with $p \in \{ 7, 11, 19, 31\}$ satisfy $\RP$.
For all other primes $p>5$, $\PSL(2,p)$ satisfies  $\RP$ if and only if $p \equiv 3$ or $-3$ mod $8$ and $p \equiv 3$ or $-3$ mod $10$. In other words, we have the following:
\begin{center}
	\begin{tabular}{ l | c | r }
	\label{table}
		\backslashbox{$\mod8$}{$\mod10$} & $p \equiv \pm1$ & $p \equiv \pm3$ \\ \hline
    		$p \equiv \pm1$ & $\RP$ fails & $\RP$ fails \\ \hline
    		$p \equiv \pm3$ & $\RP$ fails & $\RP$ holds \\
		\end{tabular}
\end{center}
\end{thm}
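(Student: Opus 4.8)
The plan is to reduce the replacement property for $G=\PSL(2,p)$ to a finite combinatorial question about triples of maximal subgroups, and then to settle that question via Dickson's classification. The first step is to record the maximal subgroups of $G$: the Borel $B$ of order $p(p-1)/2$, the dihedral groups $D_{p-1}$ and $D_{p+1}$, and then $S_4$ exactly when $p\equiv\pm1\pmod8$, $A_5$ exactly when $p\equiv\pm1\pmod5$, and $A_4$ exactly when neither $S_4$ nor $A_5$ embeds, i.e.\ when $p\equiv\pm3\pmod8$ and $p\equiv\pm3\pmod{10}$ --- which is precisely the ``$\RP$ holds'' cell of the table. I would then tabulate the conjugacy classes and, above all, the pairwise intersections of maximal subgroups: two Borels meet in a split torus; $B\cap D_{p-1}$ is a split torus when the two ``match'' and has order $\le2$ otherwise; $D_{p-1}\cap D_{p+1}$ has order $\le2$; $S_4\cap A_5$ is a conjugate of $A_4$; and intersections involving $A_4,S_4,A_5$ are subgroups of these finite groups, of order at most $8$ or $12$.

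Next I would determine $m(G)$; the claim is that $m(\PSL(2,p))=3$ for every $p>5$. A length-$3$ irredundant generating sequence is easy to produce, so the work is the upper bound. In an independent $4$-tuple $(g_1,\dots,g_4)$ generating $G$, each omit-one subgroup $H_i=\langle g_j: j\neq i\rangle$ is proper and carries an independent generating set of size $3$; inspecting the list of maximal subgroups one sees each $H_i$ must be a conjugate of $S_4$, $A_5$, a Borel, or a dihedral group, and then the constraints $g_i\in H_j$ for all $j\neq i$, combined with the smallness and ``alignment'' of the relevant intersections, force $\langle g_1,\dots,g_4\rangle$ back inside a proper subgroup --- typically because once two of the $g_i$ lie in a common torus, a reflection or a unipotent among the remaining generators already inflates some $H_i$ to $G$. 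Granting $m(G)=3$, the reduction is: $\RP$ fails if and only if there are pairwise distinct maximal subgroups $M_1,M_2,M_3$ with $M_1\cap M_2\cap M_3\neq1$ and elements $g_i\in M_j\cap M_k$ (indices $\{i,j,k\}=\{1,2,3\}$) with $\langle g_1,g_2,g_3\rangle=G$. One proves this by taking the non-replaceable element to have prime order, observing that a unipotent lies in a unique maximal subgroup and so cannot serve as the common element (hence that element is typically an involution), and checking that this displayed data is equivalent to the existence of a non-replaceable element for some irredundant triple of length $3$.

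For the congruence classes where the theorem asserts $\RP$ fails --- all cells except the bottom right, i.e.\ whenever $S_4$ or $A_5$ is a maximal subgroup --- I would exhibit an explicit bad triple. Fix an involution $t$; since for large $p$ many conjugates of $S_4$ (when it is maximal) contain $t$, and likewise for $A_5$, one can take $M_1,M_2$ to be two such conjugates and $M_3$ to be a third maximal subgroup through $t$ (for instance $C_G(t)$, which is a dihedral group). One then checks that suitable elements $g_3\in M_1\cap M_2$, $g_2\in M_1\cap M_3$, $g_1\in M_2\cap M_3$ escape every proper overgroup and generate $G$, while $g=t$ witnesses non-replaceability. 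Carrying out this construction uses only that the ambient group and the relevant intersections are large enough, which holds for all eligible $p$ outside the short list $\{7,11,19,31\}$.

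For the ``$\RP$ holds'' cases I must rule out every bad triple. Borel/dihedral triples collapse: a matched $B\cap D_{p-1}$ is a torus lying inside $D_{p-1}$ and $D_{p-1}\cap D_{p+1}$ has order at most $2$, so the three chosen elements are all forced into a single dihedral group or Borel and cannot generate $G$. Triples involving $A_4$ collapse because $A_4\cap B$ and $A_4\cap D_{p\pm1}$ are so small --- and so aligned with the torus through the common element of $M_1\cap M_2\cap M_3$ --- that $\langle g_1,g_2,g_3\rangle$ again cannot leave a proper subgroup; this settles the entire bottom-right cell uniformly. Finally the four exceptional primes are handled by direct finite verification: there $|G|\le14880$, the maximal subgroups and their intersections are few, and one checks by hand (or machine) that no bad triple exists, so $\RP$ holds even though the congruence conditions would predict failure. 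I expect the main obstacle to be precisely this positive direction --- the exhaustive, case-complete argument that in the $\RP$-holding congruence classes no choice of a single element from each pairwise intersection of a triple of maximal subgroups generates $G$ --- and, relatedly, confirming that the four small primes genuinely land on the ``$\RP$ holds'' side rather than behaving like the generic failure cases.
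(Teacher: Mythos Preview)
Your reduction and overall strategy match the paper's, but there is a genuine error that breaks the argument for the four exceptional primes. You assert that $m(\PSL(2,p))=3$ for every prime $p>5$ and then propose to verify the exceptions $p\in\{7,11,19,31\}$ by checking that no bad triple $(M_1,M_2,M_3)$ exists. In fact Jambor's theorem (quoted in the paper as Theorem~\ref{Jambor}) gives $m(\PSL(2,p))=4$ for exactly these four primes, and bad triples \emph{do} exist for each of them: $7\equiv -1\pmod 8$, $11\equiv 1\pmod{10}$, $19\equiv -1\pmod{10}$, $31\equiv -1\pmod 8$, so each admits a maximal $S_4$ or $A_5$, and the very construction you sketch for the failure cases produces a length-$3$ irredundant generating sequence with a non-replaceable involution. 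The reason $\RP$ nonetheless holds is precisely that $m=4$: one must work with quadruples, and the paper observes that any triple in general position with nontrivial radical has radical $\cong\bbZ_2$ or $\bbZ_3$, so a length-$4$ sequence in general position must have trivial radical. Your proposed ``direct finite verification that no bad triple exists'' would therefore fail.

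A secondary point: for the positive direction ($p\equiv\pm3\pmod 8$ and $p\equiv\pm3\pmod{10}$) you aim to show that for every triple of maximal subgroups meeting nontrivially, the elements chosen from the pairwise intersections cannot generate $G$. The paper instead proves the stronger and cleaner statement that no triple of maximal subgroups from $\{B,D_{p-1},D_{p+1},A_4\}$ can be in general position with nontrivial radical at all. This sidesteps any need to analyze which triples of elements generate $G$; the obstruction is purely at the level of the subgroup lattice (e.g.\ $A_4$ is eliminated because every length-$3$ chain in $A_4$ passes through its unique Klein four subgroup, and dihedral triples are eliminated via centralizer and center arguments). Your version may be salvageable, but it is doing more work than necessary and is correspondingly vaguer.
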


Even though $\RP$ fails for the majority of primes, examples of failure are rare in these cases,
in the sense that most elements of $\PSL(2,p)$ can still replace an element in every irredundant generating sequence of length $m(G)$. An element that fails to do so will be called a \emph{witness to failure}. 

\begin{thm}
\label{Theorem2}
Witnesses to failure for $\PSL(2,p)$ must have order $2$ or $3$; if  $p \not\equiv \pm 1 \mod10$, 
they must have order $2$.
\end{thm}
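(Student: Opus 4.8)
The plan is to recast the witness condition in terms of maximal subgroups and then kill off large element orders by intersection estimates. First I would invoke $m(\PSL(2,p))=3$, so that the relevant irredundant generating sequences are the irredundant generating triples. Unwinding the definition, a nonidentity $g$ is a witness to failure precisely when there is an irredundant generating triple $(a,b,c)$ for which $g$ replaces no entry, i.e.\ none of $\langle g,b,c\rangle$, $\langle g,a,c\rangle$, $\langle g,a,b\rangle$ equals $G$; equivalently there are maximal subgroups $M_a\supseteq\{g,b,c\}$, $M_b\supseteq\{g,a,c\}$, $M_c\supseteq\{g,a,b\}$. These are pairwise distinct (else $G=\langle a,b,c\rangle$ would lie in one of them), and $g\in M_a\cap M_b\cap M_c$, while $a\in M_b\cap M_c$, $b\in M_a\cap M_c$, $c\in M_a\cap M_b$. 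The goal is: whenever $|g|\notin\{2,3\}$, or $|g|=3$ with $p\not\equiv\pm1\bmod10$, produce a \emph{single} proper subgroup $H\le G$ containing all three pairwise intersections $M_a\cap M_b$, $M_a\cap M_c$, $M_b\cap M_c$; this forces $G=\langle a,b,c\rangle\le H$, a contradiction.

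The engine is Dickson's classification of the maximal subgroups of $\PSL(2,p)$ — Borels $B\cong\bbF_p\rtimes C_{(p-1)/2}$, dihedral $D_{p-1}$ and $D_{p+1}$, and the exceptional $A_4$, $S_4$, $A_5$ present according to $p\bmod8$ and $p\bmod10$ — together with a determination of which maximals contain a fixed $g$. If $|g|=p$ then $g$ is unipotent, fixes a unique point of $\bbP^1(\bbF_p)$, hence lies in a unique Borel and in no other maximal, so it cannot lie in three distinct maximals. If $|g|=d$ is prime to $p$ with $d>5$, then no exceptional subgroup has an element of order $d$: a nonsplit semisimple $g$ then lies in just one $D_{p+1}$ (again impossible), while a split semisimple $g$ lies only in the two Borels $B_1,B_2$ through its two fixed points and in $D_{p-1}=N_G(T)$ with $T=C_G(g)$ the unique split torus through $g$; since $B_1\cap B_2=B_1\cap D_{p-1}=B_2\cap D_{p-1}=T$, one may take $H=T$.

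The remaining orders $d\in\{3,4,5\}$ are where exceptional maximals can also contain $g$; here I set $N:=N_G(\langle g\rangle)$, which is $D_{p-1}$ or $D_{p+1}$ according as $g$ is split or nonsplit (and $\langle g\rangle\le T\le N$ in the split case). For $d=4$ (so $p\equiv\pm1\bmod8$ and $S_4$ is maximal), every proper subgroup of $S_4$ containing an order-$4$ element is $C_4$ or $D_4$, both normalizing $\langle g\rangle$; for $d=5$ (so $p\equiv\pm1\bmod10$ and $A_5$ is maximal), every proper subgroup of $A_5$ containing an order-$5$ element is $C_5$ or $D_{10}$, both normalizing $\langle g\rangle$. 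Using also that $A_4$ embeds in no Borel (Borels have cyclic Sylow $2$-subgroups) and in no dihedral subgroup, one checks that every intersection of two distinct maximals containing $g$ lies in $N$, so $H=N$ works. For $d=3$ with $p\not\equiv\pm1\bmod10$, the only exceptional maximal available is $S_4$ (if $p\equiv\pm1\bmod8$) or $A_4$ (if $p\equiv\pm3\bmod8$): a proper subgroup of $A_4$ containing an order-$3$ element is $\langle g\rangle$, while a proper subgroup of $S_4$ containing one is $\langle g\rangle$, $S_3$, or $A_4$; the first two lie in $N$, and $A_4$ cannot arise as such an intersection, since $N_G(A_4)=S_4$ (so two distinct maximal $S_4$'s share no $A_4$) while $A_4$ embeds in no Borel or dihedral subgroup. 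Hence again every pairwise intersection lies in $N$, and $H=N$ works; this exhausts all cases.

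I expect the main obstacle to be the bookkeeping of the third step: for each residue of $p$ modulo $8$ and modulo $10$, and for $g$ split or nonsplit, one must identify precisely the maximal subgroups containing $g$ and check that \emph{every} pairwise intersection is confined to $N_G(\langle g\rangle)$, assembling without gaps a number of routine inputs ($N_G(A_4)=S_4$ when $S_4\le\PSL(2,p)$, cyclic Sylow $2$-subgroups of Borels, $A_4$ not dihedral, $B_1\cap B_2=B_i\cap N_G(T)=T$, and the subgroup lattices of $A_4$, $S_4$, $A_5$); the small primes at which $D_{p\pm1}$ fails to be maximal need only a minor adjustment to this list, since the subgroup $H$ chosen is proper whether or not it is maximal. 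Finally, for $d=3$ with $p\equiv\pm1\bmod10$ the confinement genuinely fails — two distinct maximal $A_5$'s can meet in a common $A_4$, which contains order-$3$ elements but normalizes none of its order-$3$ subgroups — so the order-$3$ exception in the statement is sharp.
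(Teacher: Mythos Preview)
Your argument is correct and is organized rather differently from the paper's. The paper proves the first clause by a two-step reduction: it first shows (Proposition~\ref{RPHolds}) via centralizer computations, the center of a dihedral group, and the unique $\bbZ_2\times\bbZ_2$ in $A_4$, that no triple of maximal subgroups drawn only from the Borel, $D_{p-1}$, $D_{p+1}$, $A_4$ types can be in general position with nontrivial radical; hence any offending triple must contain an $S_4$ or $A_5$, forcing the witness order into $\{2,3,4,5\}$; it then kills $4$ and $5$ by the chain argument $\bbZ_4\trianglelefteq D_8\le S_4$ and $\bbZ_5\trianglelefteq D_{10}\le A_5$. Your route is more uniform: for each forbidden order you list all maximal subgroups through $g$ and show every pairwise intersection already lies in $N_G(\langle g\rangle)$ (or in $T$ for $d>5$ split), so $a,b,c$ are trapped in a proper subgroup. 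This avoids Proposition~\ref{RPHolds} entirely and is more direct for the present statement, though the paper's detour has the payoff of simultaneously proving that $\RP$ holds when $p\equiv\pm3\bmod 8$ and $\pm3\bmod 10$. For the second clause the two arguments essentially coincide: the paper's observation that both $M_1\cap M_2$ and $M_1\cap M_3$ must equal the unique $S_3$ above $\langle w\rangle$ in $S_4$ is exactly your confinement to $N_G(\langle g\rangle)$, together with your exclusion of $A_4$ as an $S_4\cap S_4'$ via $N_G(A_4)=S_4$.

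One small omission: you open by invoking $m(\PSL(2,p))=3$, but for $p\in\{7,11,19,31\}$ one has $m=4$. This does not damage the theorem, since for those primes $\RP$ holds and $\W(G)=\emptyset$, making the statement vacuous there; but you should say so explicitly.
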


The rare occurrences of witnesses to failure can be observed empirically via computer algebra systems such as GAP \cite{GAP}. 
In fact, checking whether a finite group satisfies $\RP$ can be done computationally. All the code used for this paper can be found on \MYhref{https://github.com/aidanlorenz/GeneratingSetsOfFiniteGroups}{the second author's GitHub page}.
For the majority of this paper, proof methodology is elementary and applicable for other classes of groups provided that $m(G)$ and the isomorphism classes of maximal subgroups are known. 
However, the latter half of the proof of the Theorem \ref{Theorem1} uses extensive knowledge about the subgroup lattice of the group. This can be reduced to character theoretic level using Mackey's theorem and other considerations of \cite{KomiTsar}.

\section{Notational conventions and definitions}

\begin{defi}
For a finite group $G$, r($G$) (resp. m($G$)) denotes the length of the shortest (resp. longest) irredundant generating sequence of $G$.
\end{defi}

\begin{nota}
$\Gamma_{n}(G)$ will denote the set of all irredundant generating sequences of $G$ of length $n$.
\end{nota}

Applying a more general theorem of Tarski \cite{Tarski} to groups, D. Collins was able to show that for all $n$ with $r(G) \leq n \leq m(G)$, $\Gamma_{n}(G) \neq \emptyset$.

\begin{defi}
A sequence of subgroups $(H_{1}, ..., H_{n})$ of a group is said to be in \textbf{general position} if, for all $j \in I = \{1, ..., n\}$, we have 
$$\bigcap_{i \neq j} H_{i} \not \subseteq H_{j}.$$ 
\end{defi}

We think of maximal subgroups in general position as group theoretic analogs of hyperplanes in general position: maximal subobjects which become strictly smaller upon intersection.

\begin{defi}
For $S = (M_{1}, ..., M_{n})$ a sequence of maximal subgroups of a group, we say a sequence $s = (g_{1}, ..., g_{n})$ \textbf{corresponds} to $S$ if $g_{i} \in M_{j}$ for all $j \neq i$, but $g_{i} \notin M_{i}$.
\end{defi}

Given an irredundant generating sequence $s = (g_{1}, ..., g_{n})$ of a finite group $G$, we can construct a corresponding sequence of maximal subgroups in the following fashion: let $$H_{i} := \langle g_{1}, ..., g_{i-1}, g_{i+1}, ..., g_{n} \rangle$$ Since $G$ is finite, each $H_{i}$ is contained in some maximal subgroup $M_{i}$. We thus associate to the irredundant generating sequence $s=(g_{1}, ..., g_{n})$, the sequence of maximal subgroups $S:=(M_{1}, ...., M_{n})$. It is straight forward to see that $S$ is in general position.

\begin{remark}
Though we can associate any irredundant generating sequence with a corresponding sequence of maximal subgroups in general position, we typically cannot do the converse.
One can observe that given a sequence of maximal subgroups in general position $S$, any sequence corresponding to $S$ will be irredundant. However these irredundant sequences are not necessarily generating sequences. 
The question remains open as to when we can make the converse association. 
\end{remark}

\begin{defi}
Given a sequence of maximal subgroups $S = (M_{1}, ..., M_{n})$, we call the \textbf{radical of S} (denoted $\rad(S)$) the intersection of all $M_{i}$: $$ \rad(S) := \bigcap_{1 \leq i \leq n} M_{i}.$$
\end{defi}

In each of the following two definitions, $G$ is a finite group, and $s = (g_{1}, ..., g_{n})$ is an irredundant generating sequence of $G$.

\begin{defi}
\label{RPfors}
$G$ satisfies the \textbf{replacement property relative to s} if for all $1 \neq g \in G$, there exists $i \in \{1, ..., n\}$ such that $s' := (g_{1}, ..., g_{i-1}, g, g_{i+1}, ..., g_{n})$ generates $G$.
\end{defi}

\begin{remark}
\label{equivdef}
An alternative definition to \ref{RPfors} replaces ``$g \neq 1$" with ``$g \notin \Phi(G)$" where $\Phi(G)$ denotes the Frattini subgroup of $G$, which is well known to consist of all non-generators of $G$. For our purposes in studying $\PSL(2,p)$, we have that $\Phi(\PSL(2,p)) = \{1\}$ for all $p$, so the definitions are equivalent.
\end{remark}

Thanks to an unpublished result from R.K. Dennis and D. Collins, we know that for a given finite group $G$ and for all $n$ with $r(G) \leq n < m(G)$, there exists some $s \in \Gamma_{n}(G)$ such that $G$ does not satisfy the replacement property relative to $s$. This allows us to define the replacement property in full generality:

\begin{defi}
\label{RP}
Let $m = m(G)$. We say $G$ satisfies the \textbf{replacement property} (abbreviated $\RP$) if $G$ satisfies the replacement property for all $s \in \Gamma_{m}(G)$.
\end{defi}

\begin{remark}
\label{quotientdef}
In the case when $\Phi(G) \neq \{1\}$, one could also use the definition that $G$ satisfies $\RP$ if and only if $G/\Phi(G)$ satisfies $\RP$ (in the sense of \ref{RP}). 
This is in accordance with Remark \ref{equivdef}.
\end{remark}

Notice, in Definition \ref{RPfors}, we do not require $s'$ to be irredundant. If we did, this would quickly lead to the result that for any group $G$ satisfying $\RP$ we would have that $r(G) = m(G)$, which too strictly limits the groups that might possibly enjoy this property. This can be demonstrated by the group $S_4$, an irredundant generating set containing transpositions and a cycle of length 4. 

Some examples of groups that satisfy $\RP$ include $M_{11}$ \cite{Nachman} and $S_{n}$ (for $n>6$ it follows from Theorem 2.1 in \cite{CamCar}). 

\begin{nota}
Given a finite group $G$, let $m = m(G)$. We call the subset $$\mathsf{W}(G) := \{g \in G \mid \exists s \in \Gamma_{m}(G) \text{ such that } g \text{ cannot replace any element of s}\}$$ the set of \textbf{witnesses to failure}.
\end{nota}

\section{An equivalent condition and applications to PSL(2,p)}

\begin{prop}[R.K. Dennis \& D. Collins (Unpublished)]
\label{iffCondition}
For a finite group $G$, let $m = m(G)$. Then $G$ satisfies $\RP$ if and only if for every sequence of maximal subgroups in general position of length $m$ corresponding to some irredundant generating sequence, $S$, we have that rad$(S) = \{1\}$.
\end{prop}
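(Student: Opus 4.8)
The plan is to establish both implications by contraposition, using the single explicit correspondence between irredundant generating sequences and sequences of maximal subgroups in general position described just before the statement. The lemma that does all the work is elementary, and I would isolate it first. Let $s = (g_1,\dots,g_m)$ be an irredundant generating sequence and, for $1 \le i \le m$, put $H_i := \langle g_1,\dots,g_{i-1},g_{i+1},\dots,g_m\rangle$, a proper subgroup since $s$ is irredundant. Then for $1 \ne g \in G$, replacing $g_i$ by $g$ in $s$ produces a generating sequence if and only if $\langle H_i, g\rangle = G$; consequently $g$ replaces no entry of $s$ if and only if $\langle H_i, g\rangle \subsetneq G$ for every $i$, which — since $G$ is finite — happens if and only if for every $i$ there is a maximal subgroup $M_i \supseteq \langle H_i, g\rangle$.

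Given this, the ``if'' direction goes as follows. Assume $G$ fails $\RP$; then there is $s = (g_1,\dots,g_m) \in \Gamma_m(G)$ and $1 \ne g \in G$ that replaces no entry of $s$. Choose maximal subgroups $M_i \supseteq \langle H_i, g\rangle$ as above and set $S := (M_1,\dots,M_m)$. I claim $S$ is of the type quantified over in the statement and has nontrivial radical. Indeed, $H_i \subseteq M_i$ forces $g_j \in M_i$ for all $j \ne i$, while $g_i \notin M_i$ (otherwise $G = \langle H_i, g_i\rangle \subseteq M_i$, contradicting that $s$ generates); hence $s$ corresponds to $S$, so in particular $S$ corresponds to an irredundant generating sequence and, by the observation preceding the statement, is in general position. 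Finally $g \in \bigcap_i M_i = \rad(S)$ and $g \ne 1$, so $\rad(S) \ne \{1\}$, contradicting the hypothesis.

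For the ``only if'' direction, suppose some $S = (M_1,\dots,M_m)$ of maximal subgroups in general position, of length $m$, corresponds to an irredundant generating sequence $s = (g_1,\dots,g_m)$ and satisfies $\rad(S) \ne \{1\}$; pick $1 \ne g \in \rad(S)$. Note $s \in \Gamma_m(G)$, since $s$ is an irredundant generating sequence whose length equals that of $S$, namely $m = m(G)$. Because $s$ corresponds to $S$ we have $H_i = \langle g_j : j \ne i\rangle \subseteq M_i$, and since also $g \in M_i$ we get $\langle H_i, g\rangle \subseteq M_i \subsetneq G$ for every $i$. By the initial lemma $g$ replaces no entry of $s$, so $G$ fails $\RP$ relative to $s$, hence fails $\RP$.

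I do not expect a genuine obstacle: the proposition is in essence a translation of definitions. The only points needing a little care are the bookkeeping in the ``if'' direction — checking that the constructed $S$ really corresponds to an irredundant generating sequence and is in general position (both consequences of the irredundancy of $s$) — and the observation that a single $s$ may correspond to several sequences $S$ because the maximal subgroups $M_i \supseteq H_i$ are not unique; this is exactly why the quantifier ``for every sequence $\dots$ corresponding to some irredundant generating sequence'' is the right one for both implications to close. One should also keep the length bookkeeping straight, but this is automatic since corresponding sequences share their length and $\Gamma_m$ is used throughout.
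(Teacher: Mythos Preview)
Your proof is correct and follows essentially the same route as the paper's: both directions are proved by contraposition, constructing the $M_i$ as maximal subgroups above $\langle H_i, g\rangle$ in one direction and reading off that $g$ fails to replace any entry from $g \in \rad(S)$ in the other. Your write-up is somewhat more careful in verifying that the constructed $S$ actually corresponds to $s$ (in particular that $g_i \notin M_i$) and in noting the non-uniqueness of the $M_i$, but the argument is the same.
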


\begin{proof}
We prove the ``if" direction by contrapositive: assume $G$ does not satisfy $\RP$. Then there exists an irredundant generating sequence $s = (g_{1}, ..., g_{m})$ and an element $1 \neq g \in G$ such that for no $i \in \{1,..,n\}$ can $g_{i}$ be replaced by $g$ to yield a generating sequence. Now for each $i$, let $$ \langle g_{1}, ..., g_{i-1}, g, g_{i+1}, ..., g_{m} \rangle =: H_{i} \lneqq G$$ with proper containment because $s$ was irredundant and $g$ fails the replacement property relative to $s$. For each $H_{i}$, pick a maximal subgroup $M_{i}$ such that $H_{i} \leq M_{i}$. Clearly, the sequence $(M_{1}, ..., M_{m})$ corresponds to $s$ (thus, is in general position), and by construction, $g \in M_{i}$ for all $i$.

For the ``only if" direction, assume there exists a sequence of maximal subgroups in general position of length $m$, $S = (M_{1}, ..., M_{m})$ corresponding to some irredundant generating sequence of $G$, $s = (g_{1}, ..., g_{m})$, where rad$(S) \neq \{1\}$ so there exists a nontrivial element, $x \in \rad(S)$. Then, $$ \langle g_{1}, ..., g_{i-1}, x, g_{i+1}, ..., g_{m} \rangle \leq M_{i} \lneqq G$$ because $x \in M_{i}$ by assumption. Since $i$ was arbitrary, we know that $x$ fails the replacement property relative to $s$, and hence $G$ does not satisfy $\RP$.
\end{proof}
\vspace{2mm}
For a sequence $S$ of maximal subgroups in general position corresponding to an irredundant generating sequence we have that $\rad(S) \subseteq \W(G)$.
It is known that for any finite non-abelian simple group $G$, $r(G) = 2$. A further result by Jambor \cite{Jambor} follows:
\begin{thm}[Jambor \cite{Jambor}]
\label{Jambor}
$m(\PSL(2,p)) = 3$ for all primes except 7, 11, 19, and 31 in which cases we have $m(\PSL(2,p)) = 4$.
\end{thm}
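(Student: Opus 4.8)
Since this theorem is quoted from \cite{Jambor}, I only sketch the line of attack one would follow to prove it. The plan is to use the dictionary described just before Proposition~\ref{iffCondition}: an irredundant generating sequence of length $n$ produces $n$ \emph{distinct} maximal subgroups $M_1,\dots,M_n$ in general position, and conversely any $n$ maximal subgroups in general position admit a corresponding irredundant sequence (choose $g_i\in\bigcap_{j\ne i}M_j\setminus M_i$), which need not, however, generate $G$. So $m(G)$ is the largest $n$ for which $\PSL(2,p)$ has $n$ maximal subgroups in general position carrying a corresponding \emph{generating} sequence, and the two ingredients are Dickson's classification — for $G=\PSL(2,p)$ the maximal subgroups are the Borel $B=C_p\rtimes C_{(p-1)/2}$, the torus normalizers $D_{p-1}$ and $D_{p+1}$ (apart from a few small $p$), and one of $A_4$, $S_4$, $A_5$ according to $p\bmod 8$ and $p\bmod 10$ — together with the intersection patterns among these subgroups.

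For the lower bound $m(G)\ge 3$ (all primes $p\ge 5$) I would take three involutions generating $G$: these exist because a nonabelian simple group is generated by its involutions, and any such triple is automatically irredundant since any two involutions generate a proper dihedral subgroup. For $m(G)\ge 4$ when $p\in\{7,11,19,31\}$ I would exhibit an explicit family of four maximal subgroups in general position carrying a generating sequence. A uniform recipe: let $M_4$ be an exceptional maximal subgroup ($S_4$ for $p=7$, $A_5$ for $p=11,19$, either for $p=31$); choose inside $M_4$ an irredundant generating triple $(g_1,g_2,g_3)$ of $M_4$ whose three $2$-generated subgroups each embed into a maximal subgroup of $G$ different from $M_4$ — possible because for these small primes the relevant copies of $D_6$, $D_8$, $D_{10}$, $A_4$ in $S_4$ or $A_5$ lie in more than one maximal subgroup of $G$ — and adjoin $g_4\notin M_4$ lying in all three; then $(g_1,g_2,g_3,g_4)$ is irredundant and generates. (These four primes can equally well be confirmed with GAP.)

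For the upper bound I must show $m(G)\le 4$ always and $m(G)\le 3$ when $p\notin\{7,11,19,31\}$. Let $M_1,\dots,M_n$ be maximal subgroups in general position with radical $R$; general position makes every triple, hence every pairwise, intersection strictly larger than $R$. The useful facts are: (i) an element of order $p$ lies in a unique maximal subgroup, so $p\nmid|R|$ unless $n=1$; (ii) three distinct Borels intersect trivially, so at most two $M_i$ are Borels; (iii) a subgroup of prime order $q\notin\{2,3,5,p\}$ lies in at most two Borels, exactly one torus normalizer, and no exceptional subgroup, hence in at most three maximal subgroups in total; (iv) when $R=\{1\}$ and $n\ge 4$, no two of the $M_i$ meet in a subgroup of prime order, for otherwise that subgroup would lie in \emph{every} $M_j$. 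From (i)--(iv) one gets that $n\ge 4$ forces $R$ to be a $2$-group (or trivial): any odd prime dividing $|R|$ either gives $n=1$ or, via (iii), gives $n\le 3$, or — for $q\in\{3,5\}$ — confines the family to the few maximal subgroups containing $C_q$, whose pairwise intersections cannot all strictly contain $R$ (by the argument of (iv)) unless $(p\pm1)/2$ obeys strong divisibility constraints. A case analysis in the remaining (essentially $2$-group) cases, using the intersection patterns of the dihedral-type and exceptional maximal subgroups, rules out $n\ge 5$ altogether and shows that a general-position $4$-family carrying a generating sequence requires $G$ to have an $S_4$ or an $A_5$ and $(p\pm1)/2$ to satisfy further numerical conditions, which hold only for $p\in\{7,11,19,31\}$; the finitely many small primes below the threshold where this argument applies are checked by computer.

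The hard part is exactly the $2$-group case of the upper bound, together with the related $C_3$ and $C_5$ cases, when $G$ has an $S_4$ or an $A_5$ as a maximal subgroup: involutions and Klein four-subgroups lie in very many maximal subgroups, so the counting of (iii) is unavailable and one must instead squeeze the conclusion out of general position among the (mostly dihedral) maximal subgroups containing $R$, isolate the exact arithmetic conditions on $p$, and finally distinguish the genuine length-$4$ \emph{generating} configurations (only $p=7,11,19,31$) from the merely general-position ones. This is the step that needs detailed control of the subgroup lattice alluded to in the introduction, and where simply invoking \cite{Jambor} — or a direct GAP check for the few relevant small primes — is the efficient choice.
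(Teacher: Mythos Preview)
The paper does not prove this theorem at all: it is stated as a result of Jambor and cited to \cite{Jambor} without argument, then used as a black box throughout. You correctly recognize this and offer only a sketch, so there is no ``paper's own proof'' against which to compare your approach.

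That said, a few remarks on your sketch. The lower bound $m(G)\ge 3$ via an involution triple is clean and correct. Your uniform recipe for $m(G)\ge 4$ at $p\in\{7,11,19,31\}$ is plausible but underspecified: you assert one can choose $g_4\notin M_4$ lying in all three of the auxiliary maximal subgroups, and that the resulting quadruple generates, but neither claim is argued. For the upper bound, your items (i)--(iii) are essentially right (with the caveat that for $q\mid (p+1)/2$ a $C_q$ lies in a \emph{single} maximal subgroup, not three), but item (iv) is false as stated: if $M_1\cap M_2$ has prime order and $R=\{1\}$, it does not follow that $M_1\cap M_2\subseteq M_j$ for all $j$ --- general position only says $\bigcap_{j\ne i}M_j\not\subseteq M_i$, it gives no control over a \emph{pairwise} intersection relative to a third subgroup. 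This breaks the chain of reasoning that follows. You are candid that the decisive $2$-group case is the hard one and that you would ultimately defer to \cite{Jambor} or a computer check there; given that the paper itself simply cites Jambor, that is entirely appropriate.
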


Another important result that will be useful in our discussion is the classification of isomorphism types of maximal subgroups of $\PSL(2,p)$ from \cite{Dickson}:
\begin{thm}
\label{Dickson}
All maximal subgroups of $\PSL(2,p)$ are isomorphic to one of the following:
\begin{enumerate}
	\item $\bbZ_{p} \rtimes \bbZ_{\frac{p-1}{2}}$
	\item $D_{p-1}$
	\item $D_{p+1}$
	\item $A_{5}$ if and only if $p \equiv \pm 1 \mod 10$
	\item $S_{4}$ if and only if $p \equiv \pm 1 \mod 8$
	\item $A_{4}$ if and only if $p \equiv \pm 3 \mod 10$ \text{ and } $p \equiv \pm 3 \mod 8$
\end{enumerate}
where we use the convention that $|D_{n}| = n$.
\end{thm}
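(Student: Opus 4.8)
The statement is a version of Dickson's theorem, and the plan is to derive it from Dickson's complete classification of the subgroups of $\PSL(2,q)$, specialized to the prime field $q=p$, followed by an elementary analysis of containments. First I would invoke that classification: up to conjugacy, every subgroup of $\PSL(2,p)$ is cyclic, or dihedral, or of the form $\bbZ_p\rtimes\bbZ_m$ with $m\mid\frac{p-1}{2}$, or isomorphic to $A_4$, $S_4$ or $A_5$; the ``subfield'' subgroups $\PSL(2,p')$, $\mathrm{PGL}(2,p')$ that occur for general $q$ do not appear here, since $\bbF_p$ has no proper subfield. I would record at the outset that any subgroup $K$ with $p\mid|K|$ lies in a Borel $\bbZ_p\rtimes\bbZ_{\frac{p-1}{2}}$: if $K$ had more than one Sylow $p$-subgroup it would have at least $p+1$ of them, hence more than $p^2-1$ elements of order $p$, impossible since $|\PSL(2,p)|=\frac{p(p-1)(p+1)}{2}$ is divisible by $p$ only once; so $K$ normalizes its unique Sylow $p$-subgroup and thus lies in its normalizer, a Borel. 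In particular every Borel is maximal (an overgroup of a Borel $B$ again has order divisible by $p$, so lies in some Borel $B'\supseteq B$, forcing $B'=B$), and a Borel is the only type of maximal subgroup of order divisible by $p$: this yields type (1).

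Next I would treat the subgroups of order prime to $p$. A cyclic subgroup of order dividing $\frac{p\mp1}{2}$ lies in the normalizer of the corresponding maximal torus, a dihedral group of order $p\mp1$, so no cyclic subgroup is maximal. A dihedral subgroup of order dividing $p-1$ (resp.\ $p+1$) lies inside the dihedral group $D_{p-1}$ (resp.\ $D_{p+1}$) that normalizes that torus; hence a maximal dihedral subgroup has isomorphism type $D_{p-1}$ or $D_{p+1}$, giving types (2)--(3). I would be careful not to claim these types are always realized as maximal subgroups: for small $p$, e.g.\ $p=7$, $D_{p\pm1}$ sits inside an $S_4$; the theorem only asserts that the isomorphism type of a maximal subgroup lies on the list.

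It remains to handle $A_4$, $S_4$, $A_5$ and produce the congruence conditions. The arithmetic input is the classical fact that $S_4$ embeds in $\PSL(2,p)$ if and only if $\sqrt2\in\bbF_p$, i.e.\ $p\equiv\pm1\pmod 8$; that $A_5$ embeds if and only if $\sqrt5\in\bbF_p$, i.e.\ (by quadratic reciprocity, $p$ odd) $p\equiv\pm1\pmod{10}$; and that $A_4$ embeds for every $p>3$. Here one analyzes the relevant $2$-dimensional projective representations, whose fields of definition over $\bbF_p$ are controlled by these square roots. Granting this, maximality is bookkeeping: none of $A_4,S_4,A_5$ lies in a cyclic or dihedral subgroup (it is neither), nor in a Borel (for $p>5$, $p$ is prime to $12$, $24$ and $60$, so such a subgroup would inject into the cyclic quotient $\bbZ_m$ of the Borel, absurd); moreover $A_4$ has order $12<24<60$, $A_5$ has no element of order $4$ so contains no copy of $S_4$, and $S_4$ has no subgroup isomorphic to $A_5$. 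Hence $S_4$ and $A_5$, whenever present, are maximal, while $A_4$ is a maximal subgroup precisely when neither $S_4$ nor $A_5$ is a subgroup of $\PSL(2,p)$, i.e.\ $p\equiv\pm3\pmod8$ and $p\equiv\pm3\pmod{10}$. Combining all cases, the isomorphism type of every maximal subgroup is among (1)--(6) subject to the stated conditions.

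The main obstacle is the arithmetic step of the third paragraph: establishing the exact congruences $p\equiv\pm1\pmod8$ and $p\equiv\pm1\pmod{10}$ for the existence of $S_4$ and $A_5$ inside $\PSL(2,p)$ (equivalently, pinning down the prime fields over which the binary octahedral and binary icosahedral groups admit a faithful $2$-dimensional representation). Everything else — the Sylow counting that confines order-$p$ subgroups to a Borel, and the containment bookkeeping for the dihedral and exceptional cases — is routine; the only further subtlety is to remember that types (2) and (3) need not be maximal for every $p$, so the theorem must be read as describing the possible isomorphism types of maximal subgroups rather than a list that is always fully realized.
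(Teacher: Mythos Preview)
The paper does not prove this theorem: it is quoted as a classical result of Dickson (with a pointer to King for a modern account), so there is no in-paper argument to compare your proposal against. Your sketch is a sound outline of how one extracts the maximal-subgroup list from Dickson's full subgroup classification, and the containment bookkeeping for $A_4$, $S_4$, $A_5$ together with your caveat that $D_{p\pm 1}$ need not be maximal for small $p$ are all correct.

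One small fix in your first paragraph: $p+1$ Sylow $p$-subgroups contribute exactly $(p+1)(p-1)=p^2-1$ elements of order $p$, not ``more than $p^2-1$''. The contradiction is not a raw cardinality bound but the observation that $\PSL(2,p)$ itself has precisely $p^2-1$ elements of order $p$; hence $K$ would contain every Sylow $p$-subgroup of $G$, and since any two distinct Sylow $p$-subgroups already generate the simple group $G$, this forces $K=G$. With that adjustment the argument that every proper subgroup of order divisible by $p$ lies in a Borel goes through.
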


In fact, Dickson studied all subgroups of $\PSL(2,p)$. One can find a modern version of his work in \cite{King}.

For the isolated cases when $p \in \{7, 11, 19, 31\}$, B. Nachman showed that $\PSL(2,p)$ satisfies $\RP$ in \cite{Nachman}. In the same paper, Nachman showed  that for primes that are congruent to $+1 \mod 8$, $\PSL(2,p)$ does not satisfy $\RP$. Later, Ravi Fernando came up with the conjecture that is our Theorem \ref{Theorem1}.

Before we prove our main theorem, we state and prove some helpful lemmas.

\begin{lem}
\label{norms}
If $G$ is a simple group, $M_{1}$ and $M_{2}$ maximal subgroups of $G$, and there exists $N \neq \{1\}$ such that $N \trianglelefteq M_{1},M_{2}$, then $M_{1} = M_{2}$.
\end{lem}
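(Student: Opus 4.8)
The plan is to use the normalizer. Consider $N_G(N)$, the normalizer of $N$ in $G$. Since $N \trianglelefteq M_1$, we have $M_1 \leq N_G(N)$, and similarly $M_2 \leq N_G(N)$. The key observation is that $N_G(N)$ is a proper subgroup of $G$: indeed, if $N_G(N) = G$, then $N$ would be a nontrivial proper normal subgroup of $G$ (proper since $N \leq M_1 \lneq G$), contradicting the simplicity of $G$. Here I use that $N \neq \{1\}$ and that a maximal subgroup is proper, so $N$ is a proper subgroup.

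Now, since $M_1$ is maximal and $M_1 \leq N_G(N) \lneq G$, maximality forces $M_1 = N_G(N)$. By the same reasoning applied to $M_2$, we get $M_2 = N_G(N)$. Therefore $M_1 = M_2 = N_G(N)$, which is the desired conclusion.

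I do not expect any serious obstacle here; the only point requiring a moment of care is confirming that $N_G(N)$ is proper, which is exactly where simplicity of $G$ enters — one must note that $N$ being normalized by all of $G$ would make it a nontrivial proper normal subgroup, and $N$ is proper precisely because it sits inside the proper subgroup $M_1$. Everything else is a direct application of the definition of maximality.
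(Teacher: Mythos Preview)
Your argument is correct and is precisely the standard normalizer argument one would use here; the paper itself simply records the proof as ``Trivial,'' so you have supplied exactly the details that were left implicit.
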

\begin{proof}
Trivial.
\end{proof}

\begin{lem}
\label{ChainOfSubs}
For a triple of maximal subgroups $S=(M_{1}, M_{2}, M_{3})$ to be in general position it must be the case that:
\begin{enumerate}
	\item all pairwise intersections are nontrivial
	\item all of the maximal subgroups $M_{i}$ must have 2 distinct chains of nontrivial subgroups of length at least 3 (where we say the length of a chain is the number of non-trivial subgroups involved, including $M_{i}$ itself)
	\end{enumerate}
\end{lem}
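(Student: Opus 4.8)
The plan is to unpack the definition of general position for a triple $S=(M_1,M_2,M_3)$ and extract the two necessary conditions directly. Recall that general position means $\bigcap_{i\neq j}M_i \not\subseteq M_j$ for each $j\in\{1,2,3\}$; concretely, $M_2\cap M_3\not\subseteq M_1$, and cyclically. First I would observe that each such condition forces the relevant pairwise intersection to be a proper subgroup of each of the two subgroups being intersected but still large enough to escape the third: in particular $M_2\cap M_3$ cannot be trivial, since $\{1\}\subseteq M_1$ always holds, contradicting $M_2\cap M_3\not\subseteq M_1$. Running this argument for all three indices gives condition (1): all three pairwise intersections $M_1\cap M_2$, $M_1\cap M_3$, $M_2\cap M_3$ are nontrivial.

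For condition (2), fix an index, say $i=1$, and consider the two other intersections that involve $M_1$, namely $A:=M_1\cap M_2$ and $B:=M_1\cap M_3$. Both are nontrivial by (1), and both are proper subgroups of $M_1$ (they are contained in $M_2$, resp.\ $M_3$, which are distinct from $M_1$; if, say, $A=M_1$ then $M_1\subseteq M_2$, forcing $M_1=M_2$ by maximality, which also collapses general position). This already yields, for each $i$, two chains $\{1\}\lneq (M_i\cap M_j)\lneq M_i$ of nontrivial subgroups of length at least... wait, length $2$ in the paper's counting. To upgrade to length $3$ I would argue that $A$ and $B$ cannot themselves be maximal-below-$M_i$ in the crude sense that would stop the chain: the point is to produce an intermediate subgroup strictly between $M_i\cap M_j$ and $M_i$. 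Here I would invoke the general-position hypothesis once more together with the fact that $M_i\cap M_j \neq M_i\cap M_k$ (if the two intersections coincided, that common subgroup would lie in all three $M_i$, i.e.\ in $\rad(S)$, but it would then be contained in $M_1$, and one checks this is incompatible with all three general-position inequalities simultaneously — in fact $M_2\cap M_3\supseteq M_1\cap M_2\cap M_3$ would need to escape $M_1$, forcing a strictly larger subgroup). The distinctness of the two intersections inside $M_i$, neither contained in the other (again by general position), is what forces each chain to have an extra step, giving two \emph{distinct} chains of length at least $3$.

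I expect the main obstacle to be precisely this last point: promoting the obvious length-$2$ chains to length-$3$ chains and ensuring the two chains are genuinely distinct. The content is that if $M_i\cap M_j$ were a coatom directly below $M_i$ with no room above it, or if the two intersections were nested, general position would fail; so I would carefully chase the three inequalities $M_j\cap M_k\not\subseteq M_i$ to show that $M_i\cap M_j$ and $M_i\cap M_k$ are incomparable proper nontrivial subgroups of $M_i$, and then note that $M_i$ itself together with $M_i\cap M_j$ and any nontrivial subgroup of the latter (which exists, or else we can use $M_i\cap M_j \lneq M_i\cap(M_i\cap M_j)\cdot(\text{something})$ — more cleanly, incomparability of the two intersections means $M_i\cap M_j\cap M_k \lneq M_i\cap M_j \lneq M_i$ is a genuine chain of length $3$, and similarly with $j,k$ swapped, and these two chains share only $M_i$ at the top and differ in the middle). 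That observation — that the triple intersection sits strictly below each pairwise intersection, which sits strictly below $M_i$ — is the crux, and it follows from condition (1) applied after noting $\rad(S)\subseteq M_i\cap M_j$ with the inclusion forced to be strict by general position. The rest is bookkeeping.
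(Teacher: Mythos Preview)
Your proposal is correct and, after some detours, converges on exactly the paper's argument: the two chains $M_i > M_i\cap M_j > M_1\cap M_2\cap M_3$ and $M_i > M_i\cap M_k > M_1\cap M_2\cap M_3$, which are distinct (since $M_i\cap M_j \neq M_i\cap M_k$ by general position) and strict at each step. The paper's proof is essentially just the last two sentences of your final paragraph, written down directly without the preliminary exploration.
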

\begin{proof}
The first statement is trivial. As for the second, let $(M_1, M_2, M_3)$ be a sequence of maximal subgroups in general position. The chains $M_2 > M_1 \cap M_2 > M_1 \cap M_2 \cap M_3$ and $M_2 > M_3 \cap M_2 > M_1 \cap M_2 \cap M_3$ are distinct chains of length 3 as $(M_1, M_2, M_3)$ are in general position.
\end{proof}

\begin{lem}
\label{Zps}
Let $G = \PSL(2,p)$. For $p>5$, there does not exist an element of order $p$ in $\W(G)$.
\end{lem}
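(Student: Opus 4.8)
The plan is to use Proposition \ref{iffCondition} together with the structure of maximal subgroups given in Theorem \ref{Dickson}. By the equivalent condition, it suffices to show that an element $x$ of order $p$ cannot lie in $\rad(S)$ for any sequence $S$ of maximal subgroups in general position (of length $m(G)$) that corresponds to an irredundant generating sequence. So suppose, for contradiction, that $x \in \rad(S)$ with $\ord(x) = p$; then $x$ lies in every $M_i$ appearing in $S$.

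First I would observe that, by Theorem \ref{Dickson}, the only maximal subgroups of $\PSL(2,p)$ whose order is divisible by $p$ are those of type $\bbZ_{p} \rtimes \bbZ_{\frac{p-1}{2}}$ (the others have order $p-1$, $p+1$, or a divisor of $60$, none divisible by $p$ for $p>5$). Hence every $M_i$ in $S$ must be of this Borel type. Next, the key point is that in $B := \bbZ_{p} \rtimes \bbZ_{\frac{p-1}{2}}$ the unique Sylow $p$-subgroup is $P := \bbZ_p$, which is normal in $B$; since $x$ has order $p$, it lies in this normal subgroup, so $P \leq M_i$ and in fact $P \trianglelefteq M_i$ for every $i$. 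Now apply Lemma \ref{norms}: with $N = P \neq \{1\}$ normal in each pair $M_i, M_j$, and $\PSL(2,p)$ simple, we conclude $M_1 = M_2 = \cdots = M_m$. But a sequence of maximal subgroups in general position of length $m \geq 2$ cannot have all entries equal — by the general position condition, $\bigcap_{i\neq 1}H_i \not\subseteq H_1$, which is impossible if every $H_i$ is the same subgroup. This contradiction shows no such $S$ exists, hence no element of order $p$ lies in $\W(G)$.

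I expect the only mild subtlety to be bookkeeping about the case $m(G) = 4$ (the primes $7, 11, 19, 31$) versus $m(G) = 3$: the argument above is uniform since it only needs $m \geq 2$, and $r(G) = 2$ always, so $m \geq 2$ in all cases. One should also be slightly careful to invoke that two distinct Borel subgroups of $\PSL(2,p)$ intersect in a subgroup not containing any full Sylow $p$-subgroup — but this is subsumed by Lemma \ref{norms} applied to the normal copies of $\bbZ_p$, so no separate computation is needed. The main (and really only) conceptual step is recognizing that divisibility by $p$ forces the Borel type and that the Sylow $p$-subgroup is normal there; everything else is a direct appeal to simplicity and the general position definition.
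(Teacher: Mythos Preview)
Your proposal is correct and follows essentially the same argument as the paper: an element of order $p$ can only lie in a Borel-type maximal subgroup $\bbZ_p \rtimes \bbZ_{(p-1)/2}$, the cyclic $\bbZ_p$ it generates is normal there, and Lemma~\ref{norms} then forces uniqueness of the containing maximal subgroup, contradicting general position. Your write-up is slightly more careful than the paper's in explicitly handling the $m(G)=4$ cases via the uniform observation that the argument only needs $m \geq 2$, but the underlying idea is identical.
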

\begin{proof}
Suppose there exists some $x \in G$ with order $p$. Then $x$ must lie in some maximal subgroup with order divisible by $p$. From Theorem \ref{Dickson}, we can see that $x$ must lie in a copy of $\bbZ_{p} \rtimes \bbZ_{\frac{p-1}{2}}$ . Hence, $\langle x \rangle \cong \bbZ_{p} \trianglelefteq \bbZ_{p} \rtimes \bbZ_{\frac{p-1}{2}}$. But by Lemma \ref{norms}, there is only one such maximal subgroup. Thus $x$ cannot be in the intersection of 3 maximal subgroups in general position. Therefore $x \notin \W(G)$.
\end{proof}
\vspace{2mm}
We can now return to our main theorem. Its proof is divided in several cases. In Proposition \ref{RPHolds}, we consider the case where, $p \equiv 3$ or $-3 \mod 8$ and $p \equiv 3$ or $-3 \mod 10$. In Proposition \ref{RPfail}, we consider the rest of the cases.

\begin{prop}
\label{RPHolds}
If $p \equiv \pm 3 \mod 8$ and $p \equiv \pm 3 \mod 10$, then $\PSL(2,p)$ satisfies $\RP$.
\end{prop}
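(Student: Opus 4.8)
The plan is to use Proposition~\ref{iffCondition}: it suffices to show that for every sequence $S = (M_1, M_2, M_3)$ of maximal subgroups of $G = \PSL(2,p)$ in general position that corresponds to an irredundant generating sequence, $\rad(S) = \{1\}$. Under the congruence hypotheses $p \equiv \pm 3 \bmod 8$ and $p \equiv \pm 3 \bmod 10$, Theorem~\ref{Dickson} tells us the maximal subgroups are of type $\bbZ_p \rtimes \bbZ_{\frac{p-1}{2}}$, $D_{p-1}$, $D_{p+1}$, and $A_4$ (the types $A_5$ and $S_4$ being excluded by the congruences). So $S$ is a triple drawn from these four isomorphism types, and I need to rule out the possibility of a nontrivial common intersection for every such triple in general position.

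First I would dispose of several cases quickly. By Lemma~\ref{Zps}, no element of order $p$ lies in $\W(G)$, so in particular an element of order $p$ cannot be in $\rad(S)$; combined with Lemma~\ref{norms} this essentially removes the Borel subgroup $\bbZ_p \rtimes \bbZ_{\frac{p-1}{2}}$ from consideration as one of the $M_i$ whenever the radical is nontrivial — more precisely, since its only nontrivial normal-in-pairs subgroup situation is controlled by Lemma~\ref{norms}, I should check that a Borel cannot participate in a general-position triple with nontrivial radical. Then I would use Lemma~\ref{ChainOfSubs}(2): every $M_i$ in a general-position triple must admit two distinct chains of nontrivial subgroups of length $\geq 3$. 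For $M_i \cong A_4$ this is a genuine constraint — $A_4$ has subgroups of orders $1, 2, 3, 4, 12$, and chains of length $3$ through $A_4$ exist (e.g. $\bbZ_2 < V_4 < A_4$), but I need to see exactly which order-$2$ or order-$3$ elements can sit in $\rad(S)$ when an $A_4$ is present. The heart of the matter is then a finite case analysis over unordered triples of types from $\{\bbZ_p\rtimes\bbZ_{\frac{p-1}{2}},\, D_{p-1},\, D_{p+1},\, A_4\}$, in each case bounding the possible order of an element $x \in \rad(S)$ and then showing that the presence of such an $x$ forces two of the $M_i$ to coincide or to violate general position.

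The key structural input will be the intersection pattern of the maximal subgroups: the dihedral groups $D_{p\pm1}$ meet in cyclic or small dihedral subgroups, an element of order $2$ in $\PSL(2,p)$ lies in a controlled number of each type of maximal subgroup, and an element of order $3$ (which exists here since $3 \mid \frac{p\pm1}{2}$ or $3 \mid |A_4|$) likewise has a bounded "spread." For a nontrivial $x \in \rad(S)$ I would argue: if $x$ has order $\geq 4$ then it generates a cyclic group that is normal in at most... — using that in $D_{p-1}$ and $D_{p+1}$ the cyclic part is unique, so $\langle x\rangle$ normal in two of the $M_i$ forces $M_i = M_j$ by Lemma~\ref{norms} once we know $\langle x \rangle$ is normal, and otherwise $x$ lies in a reflection and has order $2$. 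This isolates the dangerous cases to $x$ of order $2$ or $3$, and there one uses the explicit count of maximal overgroups of such an element together with the general-position requirement (three genuinely distinct $M_i$ with strictly decreasing intersection chain) to derive a contradiction.

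The main obstacle I anticipate is the order-$2$ case, and specifically triples involving two dihedral subgroups $D_{p-1}$ and $D_{p+1}$ together with an $A_4$: an involution in $\PSL(2,p)$ lies in many dihedral maximal subgroups and in several copies of $A_4$, so merely counting overgroups is not enough — I will need to use the general-position hypothesis more carefully, tracking that $M_1 \cap M_2$, $M_1 \cap M_2 \cap M_3$, etc., form a strictly decreasing chain, and exploit the precise fusion/conjugacy of involutions and of Klein four-subgroups in $\PSL(2,p)$ (there is a single class of involutions, and the normalizer of a $V_4$ is the relevant $A_4$ or $S_4$). Pinning down that the $A_4$ appearing here, rather than $S_4$, genuinely restricts the chains available through it is where the congruence hypothesis $p \equiv \pm 3 \bmod 8$ does its real work, and making that argument airtight is the crux; everything else should reduce to bookkeeping with Lemmas~\ref{norms}, \ref{ChainOfSubs}, and \ref{Zps}.
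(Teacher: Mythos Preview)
Your overall plan---reduce via Proposition~\ref{iffCondition} to a case analysis over triples from $\{\bbZ_p\rtimes\bbZ_{(p-1)/2},\,D_{p-1},\,D_{p+1},\,A_4\}$---is exactly the paper's approach. But you miss two simplifications that collapse the case analysis and dissolve your anticipated ``main obstacle.'' First, $A_4$ is eliminated \emph{outright} by Lemma~\ref{ChainOfSubs}: every length-$3$ chain of nontrivial subgroups in $A_4$ has the unique $V_4 \cong \bbZ_2\times\bbZ_2$ as its middle term, so one cannot produce two such chains with common top and bottom but distinct middle, as the lemma requires. Second, $D_{p-1}$ and $D_{p+1}$ cannot appear together in a triple with nontrivial radical, since $\gcd(p-1,p+1)=2$ forces their intersection to have order at most $2$, leaving no room for a strictly smaller nontrivial triple intersection. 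Your worried-about configuration $(D_{p-1},D_{p+1},A_4)$ thus never arises.

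What remains is a triple drawn entirely from $\{\bbZ_p\rtimes\bbZ_{(p-1)/2},\,D_{p-1}\}$ or entirely from $\{\bbZ_p\rtimes\bbZ_{(p-1)/2},\,D_{p+1}\}$, and here the paper uses a centralizer argument you do not articulate. If some $x\in M_1\cap M_2$ has order $>2$ (and order $\neq p$, already excluded), then in each of these groups $C_{M_i}(x)$ is the full cyclic part, giving $C_{M_1}(x)=C_{M_2}(x)=C_G(x)=M_1\cap M_2$; once the radical or another pairwise intersection contains such an element, all pairwise intersections coincide, contradicting general position. This also forces any Borel in the triple to fail, since proper subgroups of a Borel of order $>2$ always contain an element of order $>2$. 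The only residual subcase is $\rad(S)\cong\bbZ_2$ with all $M_i\cong D_{p\pm 1}$ and pairwise intersections $\cong\bbZ_2\times\bbZ_2$; but every Klein four subgroup of a dihedral group contains its center, so $Z(M_1)=Z(M_2)=\rad(S)$, giving $\PSL(2,p)$ a nontrivial center, a contradiction. Your sketch's appeal to involution fusion and overgroup counts is in the right spirit but vaguer than necessary; the centralizer and center structure is the actual mechanism, and it makes the order-$2$ case short rather than the crux.
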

\begin{proof}
According to Theorem \ref{Dickson}, the possible maximal subgroups in this case are 
\begin{enumerate}
\label{possibles}
	\item $\bbZ_{p} \rtimes \bbZ_{\frac{p-1}{2}}$
	\item $D_{p-1}$
	\item $D_{p+1}$
	\item $A_{4}$
\end{enumerate}
We will show that no triple of these maximal subgroups can be in general position while still having a nontrivial radical, thus proving the theorem by Proposition \ref{iffCondition}. 

Firstly, looking at the subgroup lattice, all chains of nontrivial subgroups of length 3 in $A_4$ contain the unique normal subgroup $\bbZ_{2} \times \bbZ_{2}$ as the middle term.
Therefore, by Lemma \ref{ChainOfSubs}, $A_{4}$ cannot appear in any triple of maximal subgroups in general position with nontrivial intersection.

Next, suppose we have a sequence of maximal subgroups in general position $S = (M_1, M_2, M_3)$ where each $M_i$ is isomorphic to $\bbZ_{p} \rtimes \bbZ_{\frac{p-1}{2}}$ or $D_{p-1}$ and suppose there is a nontrivial element in $\rad(S)$. By Lemma \ref{Zps}, we know that this element can't have order $p$. Assume further that there exists an element $x \in M_1 \cap M_2$ such that $\vert x \vert>2$. Then,
\begin{equation}
C_{M_1}(x), C_{M_2}(x) \leq C_{G}(x) \lneq G.
\end{equation}
In fact, we have $C_{M_1}(x) = C_{M_2}(x) = M_1 \cap M_2 = C_{G}(x)$. Now, if $\rad(S)$ or $M_1 \cap M_3$ contains an element of order larger than 2, then by the argument above we have that
\begin{equation}
M_1 \cap M_2 = M_1 \cap M_3 = \rad(S)
\end{equation}
which contradicts the general position assumption. Now assume $\rad(S)$ contains only elements of order 2 and the identity. If we have $M_1 \cong \bbZ_{p} \rtimes \bbZ_{\frac{p-1}{2}}$ then,
\begin{equation}
    \rad(S) \lneq M_1 \cap M_2 , M_1 \cap M_3 \lneq \bbZ_{p} \rtimes \bbZ_{\frac{p-1}{2}}.
\end{equation}
In particular, both $M_1 \cap M_2$ and $M_1 \cap M_3$ contains elements of order larger than 2. We again get the above contradiction. Now assume all maximal subgroups $M_i$ are isomorphic to $D_{p-1}$ and their pairwise intersections have no element of order larger than 2. Then, we must have that $M_i \cap M_j \cong \bbZ_{2} \times \bbZ_{2}$ for $i \neq j$ and $\rad(S) \cong \bbZ_{2}$. However, all isomorphic copies of $\bbZ_{2} \times \bbZ_{2}$ in a dihedral group must contain the group's center. In particular,
\begin{equation}
    Z(M_1) \subset M_1 \cap M_2 , M_1 \cap M_3
\end{equation}
Hence, $Z(M_1) = Z(M_2) = \rad(S)$ which implies $\PSL(2,p)$ has a center, contradiction. The same arguments also applies to the cases where each $M_i$ are isomorphic copies of $D_{p+1}$ or $\bbZ_{p} \rtimes \bbZ_{\frac{p-1}{2}}$.

Finally, we realize that $D_{p-1}$ and $D_{p+1}$ cannot appear in the same triple of maximal subgroups in general position with non-trivial radical since any intersection of $D_{p-1}$ and $D_{p+1}$ can have order at most 2, leaving the intersection with the third maximal subgroup to be trivial. 

We have therefore showed that no triple of maximal subgroups from the list of possibilities can be in general position and have nontrivial radical simultaneously. The proof is thereby complete.
\end{proof}

\begin{cor}
\label{orders}
Witnesses to failure in $\PSL(2,p)$ have order 2 or 3.
\end{cor}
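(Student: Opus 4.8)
The plan is to extract the bound on orders directly from the case analysis carried out in Proposition \ref{RPHolds}, combined with Lemma \ref{Zps}. Recall from the remark following Proposition \ref{iffCondition} that any witness to failure lies in $\rad(S)$ for some sequence $S = (M_1, \dots, M_m)$ of maximal subgroups in general position corresponding to an irredundant generating sequence. When $m(\PSL(2,p)) = 3$ (the generic case by Theorem \ref{Jambor}), the element lies in the pairwise and triple intersections of three maximal subgroups in general position.

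First I would dispose of order $p$: Lemma \ref{Zps} already shows no element of order $p$ lies in $\W(G)$, since the unique maximal subgroup containing a given Sylow $p$-subgroup normalizes it, so such an element cannot sit in three maximal subgroups in general position. Next I would argue that no element of order strictly greater than $3$ and coprime to $p$ can be a witness. The key observation, already used in the proof of Proposition \ref{RPHolds}, is that if $x$ has order $> 2$ and $x \in M_1 \cap M_2$ with $M_1, M_2$ maximal, then $C_G(x)$ is a proper subgroup containing both $C_{M_i}(x)$, and in fact $C_{M_1}(x) = C_{M_2}(x) = M_1 \cap M_2 = C_G(x)$; running this with a third maximal subgroup $M_3$ forces $M_1 \cap M_2 = M_1 \cap M_3 = \rad(S)$, contradicting general position. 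This centralizer argument does not use the congruence hypotheses of Proposition \ref{RPHolds} and applies to any $x$ of order $> 2$ lying in an intersection of maximal subgroups — provided one knows $C_G(x)$ is the full intersection. The one subtlety is the order-$3$ case: an element of order $3$ in $\PSL(2,p)$ with $p \equiv \pm 1 \bmod 10$ lies in copies of $A_5$ (and of $A_4$ when the relevant congruences hold), where its centralizer inside the maximal subgroup is just $\langle x\rangle$ rather than the full pairwise intersection, so the argument above breaks down — which is exactly why order $3$ survives as a possible witness order.

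I would therefore structure the proof as: (i) invoke the remark after Proposition \ref{iffCondition} to reduce to $x \in \rad(S)$; (ii) cite Lemma \ref{Zps} to exclude order $p$; (iii) for $|x| > 2$ and $|x|$ coprime to $p$, show $|x| = 3$ by the centralizer argument, using that in each isomorphism type from Theorem \ref{Dickson} an element of order $> 3$ has centralizer within the maximal subgroup equal to a maximal cyclic or dihedral piece that coincides with the pairwise intersection of two such maximal subgroups, so general position is violated as in Proposition \ref{RPHolds}; and (iv) handle the four exceptional primes $p \in \{7,11,19,31\}$, where $m(G) = 4$, separately — but for these Proposition \ref{Nachman}'s result (Nachman showed $\RP$ holds) gives $\W(G) = \{1\}$, so the statement is vacuous there. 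The main obstacle I anticipate is (iii): one must verify, type by type through the list in Theorem \ref{Dickson}, that the centralizer of an order-$>3$ element inside a maximal subgroup is forced to equal the pairwise intersection, ruling out the possibility that such an element slips into three genuinely distinct maximal subgroups in general position; the order-$3$ exception in $A_4$ and $A_5$ is precisely the boundary case where this fails, which is consistent with the corollary's statement and with Theorem \ref{Theorem2}.
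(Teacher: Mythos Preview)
Your outline has the right ingredients for the start (any witness lies in some $\rad(S)$, order $p$ is excluded by Lemma \ref{Zps}, and one of the $M_i$ must be $S_4$ or $A_5$ by the work in Proposition \ref{RPHolds}), but step (iii) contains a real gap: the centralizer argument from Proposition \ref{RPHolds} does \emph{not} separate order $3$ from orders $4$ and $5$ in the way you claim. In $A_5$ the centralizer of an order-$5$ element is $\bbZ_5$, and in $S_4$ the centralizer of an order-$4$ element is $\bbZ_4$ --- exactly the phenomenon you cite as the ``order-$3$ subtlety.'' So the criterion ``centralizer within the maximal subgroup equals the pairwise intersection'' fails for $4$ and $5$ just as it does for $3$: two copies of $A_5$ can meet in a $D_{10}$ containing $\langle x\rangle\cong\bbZ_5$, which strictly contains $C_{A_5}(x)=\bbZ_5$, and two copies of $S_4$ can meet in a $D_8\supsetneq \bbZ_4=C_{S_4}(x)$. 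The centralizer identity $C_{M_1}(x)=C_{M_2}(x)=M_1\cap M_2=C_G(x)$ in the proof of Proposition \ref{RPHolds} genuinely uses the dihedral/Borel shape of the $M_i$ and does not transfer.

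What actually distinguishes $3$ from $4,5$ is a \emph{chain/normalizer} argument, and this is how the paper proceeds. Once one $M_i$ is an $A_5$ (resp.\ $S_4$) and $|x|=5$ (resp.\ $4$), general position forces two distinct proper subgroups of $M_i$ strictly between $\langle x\rangle$ and $M_i$ (Lemma \ref{ChainOfSubs}). But the only subgroup of $A_5$ strictly between $\bbZ_5$ and $A_5$ is $N_{A_5}(\bbZ_5)\cong D_{10}$, and the only subgroup of $S_4$ strictly between $\bbZ_4$ and $S_4$ is $N_{S_4}(\bbZ_4)\cong D_8$; uniqueness gives the contradiction. Order $3$ escapes precisely because $A_5$ has \emph{several} subgroups strictly between a given $\bbZ_3$ and $A_5$ (copies of $S_3$ and of $A_4$), so the chain argument no longer forces a collision. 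If you replace your centralizer step for orders $4$ and $5$ with this normalizer/chain count, the proof goes through and matches the paper's.
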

\begin{proof}
Notice that in Proposition \ref{RPHolds}, we only used the criterion that $p \equiv \pm 3 \mod 8$ and $p \equiv \pm 3 \mod 10$ to build a list of possible maximal subgroups that could occur. Since we showed in the proof that no triple consisting of subgroups isomorphic to $\bbZ_{p} \rtimes \bbZ_{\frac{p-1}{2}}, D_{p-1}, D_{p+1}, A_{4}$ can constitute a triple of maximal subgroups in general position with nontrivial radical, we know (from Theorem \ref{Dickson}) that any triple of maximal subgroups in general position with nontrivial radical must contain an $S_{4}$ or $A_{5}$. That is to say, any witness to failure must lie in some $S_{4}$ or $A_{5}$. Hence, a witness to failure can have order only $2,3,4$, or $5$.

To rule out elements of order 5, consider any triple of maximal subgroups in general position: $(A_{5}, M_{1}, M_{2})$. Suppose this triple corresponds to an irredundant generating sequence and does not intersect trivially, but contains some element $x$ such that $\vert x \vert = 5$. Then $\langle x \rangle \cong \bbZ_{5} \leq A_{5}$. The only chain of subgroups of $A_{5}$ containing $\bbZ_{5}$ is as follows: 
\begin{equation}
\label{no5}
\bbZ_{5} \trianglelefteq D_{10} \leq A_{5}
\end{equation}
Thus, for the triple to be in general position, it must be the case that 
\begin{equation}
\label{no5-2}
A_{5} \cap M_{1} \cap M_{2} \cong \bbZ_{5}
\end{equation} 
and 
\begin{equation}
\label{no5-3}
A_{5} \cap M_{1} \cong D_{10}, \quad A_{5} \cap M_{2} \cong D_{10}
\end{equation}
But $N_{A_{5}}(\bbZ_{5}) \cong D_{10}$, whence Lemma \ref{norms} implies that $A_{5} \cap M_{1}  = A_{5} \cap M_{2} \cong D_{10} $ and hence, 
\begin{equation}
\label{no5-4}
A_{5} \cap M_{1} \cap M_{2} \cong D_{10} \ncong \bbZ_{5},
\end{equation}
which is a contradiction.

We also rule out an element's having order 4 by the same argument by realizing that the only chain of subgroups of $S_{4}$ of length 3 ending with a subgroup that contains an element of order 4 is $$\bbZ_{4} \trianglelefteq D_{8} \leq S_{4}$$
\end{proof}

\begin{cor}
The groups $\PSL(2,p)$ with $p \in \{ 7, 11, 19, 31\}$ satisfy $\RP$.
\end{cor}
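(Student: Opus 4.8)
The plan is to reduce everything to a finite check via the tools already in place. By Jambor's Theorem~\ref{Jambor} we have $m := m(\PSL(2,p)) = 4$ for each $p \in \{7,11,19,31\}$, so by Proposition~\ref{iffCondition} it suffices to show that every quadruple $S = (M_1,M_2,M_3,M_4)$ of maximal subgroups in general position (corresponding to some irredundant generating sequence) has $\rad(S) = \{1\}$. I would argue by contradiction, assuming $N := \rad(S) \neq \{1\}$. Two structural facts drive the argument. First, the $M_i$ are pairwise distinct and general position is inherited by sub-tuples, so every triple $(M_i,M_j,M_k)$ is in general position. Second, the quadruple analogue of Lemma~\ref{ChainOfSubs}: for every $i$ and every ordering $(j,k,l)$ of the other three indices, $M_i \supsetneq M_i\cap M_j \supsetneq M_i\cap M_j\cap M_k \supsetneq N \supsetneq \{1\}$ is a chain of four nontrivial subgroups, the strict inclusions following from general position applied at $j$, $k$, $l$ together with $N \neq \{1\}$.

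Next I would feed in Dickson's classification, Theorem~\ref{Dickson}, recording for each of the four primes the genuine maximal subgroups; here one must be careful that for these small primes several ``generic'' types are not actually maximal (for $p=7$, $D_{p\pm 1}$ lie inside $S_4$; for $p=11$, $D_{p-1} = D_{10}$ lies inside $A_5$; and $A_4$ never occurs). A short inspection of subgroup lattices shows that none of $\bbZ_p\rtimes\bbZ_{(p-1)/2}$, $D_{p-1}$, $D_{p+1}$ admits a chain of four nontrivial subgroups, the single exception being $D_{p+1} = D_{32}$ at $p=31$, which is a $2$-group with chains of length $5$. By the chain fact above, every $M_i$ is therefore isomorphic to $S_4$ or $A_5$, with $D_{32}$ additionally allowed when $p=31$.

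Now I would eliminate $A_5$. If some $M_i \cong A_5$, then for every $j \neq i$ the subgroup $M_i\cap M_j$ carries a chain of three nontrivial subgroups ($M_i\cap M_j \supsetneq M_i\cap M_j\cap M_k \supsetneq N \supsetneq \{1\}$), and the only proper subgroup of $A_5$ with this property is a point stabiliser $\cong A_4$; so $M_i\cap M_j \cong A_4$ for all $j$. Picking $j\neq k$, the stabilisers $M_i\cap M_j$ and $M_i\cap M_k$ are distinct (equality would force $M_i\cap M_j\cap M_k = M_i\cap M_j$, violating general position of $(M_i,M_j,M_k)$), and two distinct point stabilisers of $A_5$ meet in a $\bbZ_3$, whence $M_i\cap M_j\cap M_k \cong \bbZ_3$ and $N$ is a proper subgroup of $\bbZ_3$, i.e. $N=\{1\}$, a contradiction. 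This already settles $p\in\{11,19\}$, where $A_5$ was the only surviving type. For $p\in\{7,31\}$ the survivors are $S_4$ (and $D_{32}$ when $p=31$); here I would first note that, by the argument of Proposition~\ref{RPHolds} (which uses no congruence hypothesis), a triple of maximal subgroups in general position with nontrivial radical cannot consist solely of types from $\{\bbZ_p\rtimes\bbZ_{(p-1)/2},D_{p-1},D_{p+1},A_4\}$, hence contains an $S_4$ or $A_5$; applying this to the four triples inside our quadruple and using that no $M_i\cong A_5$, at least two of the $M_i$ — say $M_1,M_2$ — are $\cong S_4$. For $j\neq 1$, $M_1\cap M_j$ again has a chain of three nontrivial subgroups inside $S_4$, so $M_1\cap M_j \cong A_4$ or $D_8$; the $A_4$ case is impossible because $A_4$ would be normal in $M_1$ and (if $M_j\cong S_4$) in $M_j$, contradicting Lemma~\ref{norms}, while if $M_j\cong D_{32}$ then $|M_1\cap M_j|$ divides $\gcd(24,32)=8$. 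Thus $M_1\cap M_2,M_1\cap M_3,M_1\cap M_4$ are Sylow $2$-subgroups of $M_1\cong S_4$, pairwise distinct (as before), hence all three of them; since any two distinct Sylow $2$-subgroups of $S_4$ intersect exactly in the normal Klein four-subgroup $V \trianglelefteq S_4$, we get $M_1\cap M_2\cap M_3 = V$, the normal Klein four-subgroup of $M_1$. Running the same argument from $M_2$ identifies $M_1\cap M_2\cap M_3$ with the normal Klein four-subgroup of $M_2$; these coincide, giving a nontrivial subgroup normal in both $M_1$ and $M_2$, so Lemma~\ref{norms} forces $M_1=M_2$, the final contradiction. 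Hence $\rad(S)=\{1\}$ always, and $\RP$ holds.

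I expect the main obstacle to be the prime-by-prime bookkeeping of the second step: correctly listing the genuine maximal subgroups of $\PSL(2,7)$, $\PSL(2,11)$, $\PSL(2,19)$, $\PSL(2,31)$ — where the exceptional subgroups produce coincidences and inclusions not visible in the generic Dickson list — and verifying the maximal chain lengths, in particular isolating $D_{32}$ at $p=31$ as the one type that survives the chain-length filter and therefore requires the separate Sylow $2$-subgroup argument. Everything else is routine given Lemma~\ref{norms} and standard facts about the subgroup lattices of $S_4$ and $A_5$.
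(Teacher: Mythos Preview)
Your argument is correct. Each step checks out: the chain-length-$4$ filter, the identification of surviving isomorphism types prime by prime, the elimination of $A_5$ via the observation that only $A_4$ among proper subgroups of $A_5$ supports a chain of three nontrivial subgroups (with two distinct $A_4$'s meeting in $\bbZ_3$), and the $S_4$ endgame using the three Sylow $2$-subgroups and Lemma~\ref{norms}. The bookkeeping you flag as the main obstacle is handled correctly; in particular, isolating $D_{32}$ at $p=31$ and then disposing of it via the $\gcd$ and triple arguments is clean.

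The paper, however, takes a much shorter route. Rather than analysing the quadruple directly, it passes through triples: any sub-triple of a quadruple in general position is itself in general position and has nontrivial radical (it contains $N$), and the paper asserts---citing ``similar considerations as of Corollary~\ref{orders} and Lemma~\ref{ChainOfSubs}''---that the radical of any such triple is isomorphic to $\bbZ_2$ or $\bbZ_3$. Since $N$ is a \emph{proper} subgroup of the radical of each sub-triple (by general position at the fourth index), $N$ must be trivial. That is the entire proof. Your approach is a direct length-$4$ chain analysis with an explicit case split on the isomorphism type of each $M_i$; the paper's approach bootstraps from the length-$3$ analysis already carried out in Proposition~\ref{RPHolds} and Corollary~\ref{orders}. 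The paper's argument is slicker but leans on the unproved sharpening ``radical $\cong\bbZ_2$ or $\bbZ_3$'' (Corollary~\ref{orders} only gives that \emph{elements} of the radical have order $2$ or $3$); your argument is longer but entirely self-contained and in fact supplies the kind of detail the paper's ``similar considerations'' gestures at.
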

\begin{proof}
By Theorem \ref{Jambor}, $m(\PSL(2,p)) = 4$ for these primes, it suffices by the Proposition \ref{iffCondition} that any sequence of maximal subgroup in general position of length 4 would have a trivial radical. Similar considerations as of Corollary \ref{orders} and of Lemma \ref{ChainOfSubs} leads to the following observation. Any sequence of maximal subgroup of length 3 in general position with non-trivial radical must  have $\bbZ_{2}$ or $\bbZ_{3}$ as its radical. Hence any sequence of maximal subgroups of length 4 must have trivial radical. 
\end{proof} 
\vspace{2mm}
We will actually be able to refine Corollary \ref{orders} using techniques discussed in the following proofs. Even though we will use explicit knowledge about the subgroup lattice of the groups of the type $\PSL(2,p)$, a similar conclusion can be drawn using the character table. The main tool in this case would be Mackey's theorem and similar work can be found in \cite{KomiTsar}. 

It remains to show that $\RP$ fails for $\PSL(2,p)$ in the remaining cases. To begin this endeavor, we quote a lemma from King \cite{King}:

\begin{lem}
\label{lemma:king}
\begin{enumerate}
\item There are $\frac{p(p^2-1)}{12}$ subgroups of $PSL(2,p)$ isomorphic to $S_3$.
\item If $p\equiv \pm 1 \mod 10$, there are $\frac{p(p^2-1)}{20}$ subgroups of $PSL(2,p)$ isomorphic to $D_{10}$.
\item If $p\equiv \pm 1 \mod 8$, then there are $\frac{p(p^2-1)}{24}$ subgroups of $PSL(2,p)$ isomorphic to $S_4$.
\item If $p\equiv \pm 1 \mod 10$, then there are $\frac{p(p^2-1)}{60}$ subgroups of $PSL(2,p)$ isomorphic to $A_5$.
\end{enumerate}
\end{lem}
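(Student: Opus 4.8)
\noindent
This count is due to Dickson (see \cite{King}), so in the paper it is simply quoted; but here is how I would prove it directly. The plan is to derive all four numbers from the orbit-counting identity: the number of subgroups of $G=\PSL(2,p)$ isomorphic to a fixed finite group $H$ equals $\sum_i [\,G:N_G(H_i)\,]$, the sum being over representatives $H_i$ of the conjugacy classes of such subgroups. So for each $H$ I need two inputs: the number of conjugacy classes, and the order of one normalizer. Throughout I use $|G|=\tfrac{p(p^2-1)}{2}$ and the two families of maximal tori of $G$ --- a split torus $T^+\cong\bbZ_{(p-1)/2}$ and a nonsplit torus $T^-\cong\bbZ_{(p+1)/2}$ --- each of which is self-centralizing (for $p>5$) with $N_G(T^{\pm})/T^{\pm}\cong\bbZ_2$ acting by inversion, so that $N_G(T^{\pm})\cong D_{p\mp1}$.

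For $H=S_3$ and $H=D_{10}$ I would argue as follows. Such an $H$ has a unique (hence characteristic) cyclic subgroup $C$ of order $\ell$, with $\ell=3$ for $S_3$ and $\ell=5$ for $D_{10}$; since moreover $C\trianglelefteq H$ (the order-$\ell$ rotation subgroup, $\ell$ odd), counting copies of $H$ splits into counting the subgroups $C$ and then, for a fixed $C$, the copies of $H$ containing it. A generator of $C$ is regular semisimple ($\ell\geq3$, $p>5$), so $C_G(C)$ is the unique maximal torus containing $C$, namely $T^+$ when $\ell\mid p-1$ and $T^-$ when $\ell\mid p+1$; exactly one case occurs, and for $\ell=5$ this is precisely the hypothesis $p\equiv\pm1\bmod10$. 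Since $C$ is characteristic in that cyclic torus and a generator $g$ of $C$ is never $G$-conjugate to a power $g^a\notin\{g,g^{-1}\}$ (compare eigenvalues), any element normalizing $C$ acts on it by $\pm1$, whence $N_G(C)=N_G(T^{\pm})\cong D_{p\mp1}$; the same comparison of eigenvalues shows there is a single conjugacy class of order-$\ell$ cyclic subgroups. Hence the number of such $C$ is $[\,G:D_{p\mp1}\,]=\tfrac{p(p\pm1)}{2}$. Over a fixed $C$, every copy of $H$ is $\langle C,r\rangle$ for one of the $\tfrac{1}{2}|D_{p\mp1}|=\tfrac{p\mp1}{2}$ reflections $r$ of $N_G(C)$ (a central involution of $D_{p\mp1}$, when present, lies in $T^{\pm}$, centralizes $C$, and is excluded), and $\langle C,r\rangle=\langle C,r'\rangle$ iff $r,r'$ lie in the same coset of $C$; so there are $\tfrac{p\mp1}{2\ell}$ copies of $H$ over each $C$. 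Multiplying gives $\tfrac{p(p\pm1)}{2}\cdot\tfrac{p\mp1}{2\ell}=\tfrac{p(p^2-1)}{4\ell}$, i.e. $\tfrac{p(p^2-1)}{12}$ for $S_3$ and $\tfrac{p(p^2-1)}{20}$ for $D_{10}$.

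For $H=S_4$ and $H=A_5$ the computation is shorter but leans on one structural fact that I would import rather than re-derive. By Theorem \ref{Dickson}, under the stated congruences these groups occur as maximal subgroups; since $G$ is simple they are not normal, hence self-normalizing, so each conjugacy class contributes $[\,G:H\,]=\tfrac{p(p^2-1)}{2|H|}$ subgroups. The remaining point --- and the genuine obstacle --- is that there are exactly \emph{two} conjugacy classes of each: the classical fact that the two classes are interchanged by the outer automorphism of $\PSL(2,p)$ coming from $\PSL(2,p)\trianglelefteq\mathrm{PGL}(2,p)$. Granting this, the totals are $2\cdot\tfrac{p(p^2-1)}{48}=\tfrac{p(p^2-1)}{24}$ and $2\cdot\tfrac{p(p^2-1)}{120}=\tfrac{p(p^2-1)}{60}$.

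The main obstacle is exactly the conjugacy-class bookkeeping: proving there are precisely two classes of $S_4$'s and of $A_5$'s, precisely one class of the cyclic subgroups $C$, and that $N_G(C)$ is no larger than $D_{p\mp1}$. All of these are standard outputs of Dickson's determination of the full subgroup lattice of $\PSL(2,p)$, so in the write-up I would cite \cite{King} and \cite{Dickson} for them and present only the orbit-counting above; a fully self-contained proof would essentially have to reconstruct the relevant portion of Dickson's theorem, e.g. by working inside $\mathrm{PGL}(2,p)$ and tracking how the Sylow structure of $S_4$ and $A_5$ embeds in $G$.
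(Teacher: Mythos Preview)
Your reading of the paper is accurate: this lemma is stated without proof and attributed to King \cite{King} (ultimately Dickson), so there is nothing to compare against beyond the citation. Your direct argument is correct and cleanly organized. The $S_3$/$D_{10}$ count via first counting the characteristic cyclic subgroup $C$ and then the overgroups inside $N_G(C)\cong D_{p\mp1}$ is sound; the key facts you use---that a nontrivial element of odd order $\ell\in\{3,5\}$ lies in a unique maximal torus, that $N_G(C)$ acts on $C$ only by $\pm1$, and that there is a single conjugacy class of such $C$---are exactly the standard torus/eigenvalue facts for $\PSL(2,p)$, and you invoke them appropriately. The $S_4$/$A_5$ count via self-normalization plus the two-conjugacy-class fact is also correct, and you are right to flag the latter as the one genuinely nontrivial input you are importing from \cite{King}/\cite{Dickson} rather than proving. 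For the paper's purposes the bare citation suffices, but your sketch would serve well as an expanded remark or appendix.
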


A simple consequence of Lemma \ref{lemma:king} that is used in subsequent proofs follows:

\begin{lem}
\label{counting}
For $p\equiv \pm 1 \mod 8$ (resp. $p \equiv \pm 1 \mod 10$), $\PSL(2,p)$ always contains two isomorphic copies of $S_4$,(resp. $A_5$) that intersect in a copy $S_3$ (resp. $D_{10}$).
\end{lem}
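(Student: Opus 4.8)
The plan is to prove Lemma \ref{counting} by a straightforward counting (double-counting) argument, using the subgroup enumerations of Lemma \ref{lemma:king} together with Sylow-type information about how many copies of the small subgroups $S_3$ (resp. $D_{10}$) sit inside a single $S_4$ (resp. $A_5$). Concretely, I would focus first on the $S_4$ case: count the number of pairs $(H, M)$ where $M \cong S_4$ is a maximal subgroup of $G = \PSL(2,p)$ and $H \cong S_3$ with $H \le M$. On one hand, each $S_4$ contains exactly four subgroups isomorphic to $S_3$ (the point stabilizers in the natural degree-$4$ action), so the number of such pairs is $4 \cdot \frac{p(p^2-1)}{24} = \frac{p(p^2-1)}{6}$. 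On the other hand, by Lemma \ref{lemma:king} there are $\frac{p(p^2-1)}{12}$ subgroups $H \cong S_3$, so on average each $S_3$ is contained in $\frac{p(p^2-1)/6}{p(p^2-1)/12} = 2$ copies of $S_4$.

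The averaging argument alone does not immediately force the existence of an $S_3$ lying in two \emph{distinct} copies of $S_4$ — a priori every $S_3$ could lie in exactly two, or the distribution could be uneven — but either way, since the average is exactly $2 > 1$, and every $S_3$ that lies in \emph{some} $S_4$ must (by the integrality of the count and the fact that the average over all $S_3$'s equals $2$) ... actually the cleanest route is: the average number of $S_4$'s containing a given $S_3$ is exactly $2$, hence there exists at least one $S_3$ contained in at least $2$ distinct copies of $S_4$, which is precisely the assertion. (If one wants the stronger statement that \emph{every} $S_3$ lies in exactly two $S_4$'s, one invokes that $\Aut(G)$ or at least $G$ acts transitively on the $S_3$'s and on the $S_4$'s — transitivity on maximal subgroups of a fixed isomorphism type is standard for $\PSL(2,p)$ — so the count is the same for every $S_3$; but for the lemma as stated, the existence of one such pair suffices.) The $A_5$ case is identical in structure: each $A_5$ contains $\frac{60}{10} = 6$ subgroups isomorphic to $D_{10}$ (the normalizers of its six Sylow $5$-subgroups), giving $6 \cdot \frac{p(p^2-1)}{60} = \frac{p(p^2-1)}{10}$ pairs, and dividing by the $\frac{p(p^2-1)}{20}$ copies of $D_{10}$ yields an average of $2$ again.

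The two potential obstacles are bookkeeping points rather than deep issues. First, I must justify the two local counts — that $S_4$ has exactly four $S_3$-subgroups and $A_5$ has exactly six $D_{10}$-subgroups — which is elementary finite group theory ($S_3$-subgroups of $S_4$ are the stabilizers of a point, and there are $4$ of them; $D_{10}$-subgroups of $A_5$ are the $5$-Sylow normalizers, and by Sylow there are $6$ Sylow $5$-subgroups, each self-normalizing only in itself but normalized by a $D_{10}$). One should also check there are no \emph{other} subgroups of $S_4$ isomorphic to $S_3$ (there are none, since any $S_3 \le S_4$ has index $4$ hence is a point stabilizer) and likewise no other $D_{10} \le A_5$. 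Second, one must make sure that a subgroup $H \cong S_3$ sitting inside a maximal $S_4$ is genuinely counted once per $S_4$ containing it — there is no subtlety here since we are literally counting containment pairs. Thus the argument is a clean instance of "average $> 1$ implies some fiber has size $\ge 2$," and I would present it in two short parallel paragraphs, one for $S_4 \supset S_3$ and one for $A_5 \supset D_{10}$.
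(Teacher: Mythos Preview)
Your proposal is correct and is essentially the same argument the paper gives: the paper also multiplies the number of $S_4$'s (resp.\ $A_5$'s) by the number of $S_3$'s (resp.\ $D_{10}$'s) each contains, compares with the total count from Lemma~\ref{lemma:king}, and concludes by pigeonhole that two copies must share one. The only cosmetic difference is that the paper phrases it as a contradiction (``if all these $S_3$'s were distinct\ldots'') rather than as an averaging statement, and it omits the parenthetical remarks about transitivity and exact counts that you correctly note are unnecessary for the lemma as stated.
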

\begin{proof}
We know that for $p\equiv \pm 1 \mod 8$, $\PSL(2,p)$ has $\frac{p(p^2-1)}{24}$ distinct subgroups isomorphic to $S_4$ (item 3 of Lemma \ref{lemma:king}). Further, each $S_{4}$ has four distinct copies of $S_{3}$. Suppose all these copies of $S_{3}$ were distinct. Then there would be at least $$4 \cdot \frac{p(p^2-1)}{24} = \frac{p(p^2-1)}{6}$$ subgroups of $\PSL(2,p)$ isomorphic to $S_{3}$. This contradicts item $1$ of Lemma \ref{lemma:king}. Therefore, there exist two copies of $S_{4}$ must intersect in an $S_{3}$. The same line of arguments with corresponding parts of Lemma \ref{counting} shows that when $p=\pm 1 \mod 10$ there exist two copies of $A_{5}$ that must intersect in an $D_{10}$.
\end{proof}
\vspace{2mm}
We can now prove the rest of Theorem \ref{Theorem1}. Our strategy for the proof will be to use the maximal subgroups in Lemma \ref{counting} and their intersections to construct irredundant generating sequences of length 3 that fail to replace an element.
\begin{prop}
\label{RPfail}
If $p \equiv \pm 1 \mod 8$ or $p \equiv \pm 1 \mod 10$ and $p \notin \{ 7, 11, 19, 31\} $, then $\PSL(2,p)$ fails $\RP$.
\end{prop}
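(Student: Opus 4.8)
The plan is to apply Proposition \ref{iffCondition}. Since $p\notin\{7,11,19,31\}$, Theorem \ref{Jambor} gives $m:=m(\PSL(2,p))=3$, so it suffices to produce a single triple of maximal subgroups $S=(M_1,M_2,M_3)$ in general position that corresponds to an irredundant generating sequence and has $\rad(S)\neq\{1\}$. Write $G=\PSL(2,p)$; recall $G$ is simple with maximal subgroups as classified in Theorem \ref{Dickson}. The triple $S$ will be assembled from the two copies of $S_4$ (resp.\ of $A_5$) provided by Lemma \ref{counting} together with the centralizer of a suitable involution; whichever of $p\equiv\pm1\bmod 8$, $p\equiv\pm1\bmod 10$ holds selects the case.

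Assume first $p\equiv\pm1\bmod 8$. Using Lemma \ref{counting}, fix distinct $M_1,M_2\cong S_4$ with $M_1\cap M_2\cong S_3$; the intersection has order exactly $6$, because a subgroup of $S_4$ properly containing an $S_3$ would be $A_4$ or $S_4$, hence normal, and a common nontrivial normal subgroup of $M_1$ and $M_2$ would force $M_1=M_2$ by Lemma \ref{norms}. Pick an involution $t\in M_1\cap M_2$. As every copy of $S_3$ in $S_4$ is a point stabilizer, $t$ is a transposition inside each $M_i$, so $C_{M_i}(t)\cong\bbZ_2\times\bbZ_2$. Let $M_3:=C_G(t)$; for $p\geq 13$ this is a dihedral maximal subgroup ($\cong D_{p-1}$ or $D_{p+1}$; cf.\ Theorem \ref{Dickson}). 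One then computes
\[
M_1\cap M_2\cap M_3=C_{M_1\cap M_2}(t)=\langle t\rangle,\qquad M_i\cap M_3=C_{M_i}(t)\cong\bbZ_2\times\bbZ_2\ \ (i=1,2),
\]
and $M_1\cap M_2$ is not contained in $M_3$ (since $S_3$ is nonabelian while $t$ is central in $M_3$). As general position of a maximal-subgroup triple means exactly that each of the three pairwise intersections strictly contains the total intersection, $S$ is in general position, and $\rad(S)=\langle t\rangle\cong\bbZ_2\neq\{1\}$.

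It remains to check that $S$ corresponds to an irredundant generating sequence; irredundancy of any corresponding sequence is automatic from general position, so I only need a corresponding generating triple, i.e.\ $g_1\in(M_2\cap M_3)\setminus M_1$, $g_2\in(M_1\cap M_3)\setminus M_2$, $g_3\in(M_1\cap M_2)\setminus M_3$ with $\langle g_1,g_2,g_3\rangle=G$. This is the crux, and the obstacle is that any two of $g_1,g_2,g_3$ lie in a common $M_i$ and so never generate $G$ on their own — all three are genuinely needed. I would take $g_2$ to be the transposition of the two points fixed by $t$ inside $M_1$ and $g_3$ a $3$-cycle of $M_1\cap M_2\cong S_3$; a short explicit check shows $\langle g_2,g_3\rangle=M_1$, and these satisfy the membership conditions ($g_2\notin M_2$ because $C_{M_1}(t)\cap M_2=\langle t\rangle$; $g_3\notin M_3=C_G(t)$ because a $3$-cycle and a transposition of $S_3$ do not commute). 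Then for any $g_1\in(M_2\cap M_3)\setminus M_1$ — there are two such, and they avoid $M_1$ since $(M_2\cap M_3)\cap M_1=\langle t\rangle$ — the subgroup $\langle g_1,g_2,g_3\rangle$ contains $M_1$ and an element not in $M_1$, hence equals $G$ by maximality of $M_1$. Proposition \ref{iffCondition} now yields the failure of $\RP$ in this case (and, incidentally, exhibits $t$ as a witness to failure of order $2$).

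For $p\equiv\pm1\bmod 10$ the argument runs in parallel with $A_5$ in place of $S_4$: Lemma \ref{counting} yields distinct $M_1,M_2\cong A_5$ with $M_1\cap M_2\cong D_{10}$ of order $10$ (again forced by Lemma \ref{norms} together with the subgroup lattice of $A_5$), $t$ is an involution of $M_1\cap M_2$, $M_3=C_G(t)$ is dihedral maximal, $C_{M_i}(t)\cong\bbZ_2\times\bbZ_2$, and $\rad(S)=\langle t\rangle$. For the generating triple one takes $g_3$ of order $5$ in $M_1\cap M_2$ and $g_2\in C_{M_1}(t)\setminus\langle t\rangle$: since the only subgroups of $A_5$ containing a fixed copy of $\bbZ_5$ are that $\bbZ_5$, its normalizer $D_{10}$, and $A_5$, and $g_2$ lies outside $M_1\cap M_2$ (the unique such $D_{10}$ through $\langle g_3\rangle$), one gets $\langle g_2,g_3\rangle=M_1$ and concludes exactly as before. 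The single genuinely delicate point in either case is the verification that the chosen pair generates the whole $S_4$ (resp.\ $A_5$); everything else is bookkeeping with subgroup orders, centralizers, and the simplicity of $G$.
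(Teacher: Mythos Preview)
Your proof is correct and follows essentially the same strategy as the paper: two copies of $S_4$ (resp.\ $A_5$) meeting in an $S_3$ (resp.\ $D_{10}$), together with the centralizer of an involution in that intersection, form the required triple, and the generating sequence is obtained by showing that a suitable involution and $3$-cycle (resp.\ $5$-cycle) generate one of the $S_4$'s (resp.\ $A_5$'s). Up to relabelling the $M_i$, your construction and the paper's coincide; you are somewhat more careful in justifying why $M_1\cap M_2$ has order exactly $6$ (resp.\ $10$) via Lemma~\ref{norms} and why the chosen pair generates all of $S_4$ (resp.\ $A_5$), points the paper leaves implicit.
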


\begin{proof}
\underline{Case 1:} $p\equiv \pm 1 \mod 8$:\\
We take two subgroups $M_1$ and $M_3$ isomorphic to $S_4$ such that their intersection is isomorphic to $S_3$. Consider $w$ an element of order 2 contained in $M_1\cap M_3$. Then there is only one subgroup $A$ of $M_1$ that is isomorphic to $\bbZ_2\times \bbZ_2$ and contains $w$. There is also a unique subgroup $B\cong \bbZ_2\times \bbZ_2$ of $M_3$ containing $w$.
We now take as $M_2$ a maximal subgroup of $\PSL(2,p)$ containing the centralizer of $w$.
More precisely, we take as $M_2$ the only subgroup isomorphic to $D_{p\mp 1}$ (plus or minus sign according to $p\equiv \pm 1 \mod 8$)$\cite{King}$ that contains both $A$ and $B$, namely, $M_2$ is the centralizer of $w$.
It is clear that the maximal subgroups $M_1$, $M_2$, and $M_3$ are in general position and have nontrivial intersection.

We take $g_1$ to be the element of order 2 in $B$ distinct from $w$ that is conjugate to $w$ in $M_3$. We take $1\neq g_3\neq w$ in $A$. Let $g_2$ be an element of order 3 in $M_1\cap M_3$. Then $g_i\in\cap_{j\neq i}M_j \setminus M_i$ and $(g_1,g_2,g_3)$ is an irredundant generating sequence of $\PSL(2,p)$ since it is irredundant by construction and $\langle g_1, g_2\rangle=M_3$, which is a maximal subgroup not containing $g_3$. Finally, note that $(g_1,g_2,g_3)$ does not satisfy the replacement property as $\langle\{g_j\}_{j\neq i} \cup\{w\}\rangle\le M_i$.

\begin{tikzcd}
 &  & M_2 \arrow[lldd, no head, bend right] \arrow[rrdd, no head, bend left] &  &  \\
 & M_1\cong S_4 \arrow[rd, no head] \arrow[ld, no head] &  & M_3\cong S_4 \arrow[ld, no head] \arrow[rd, no head] &  \\
A\cong \bbZ_2\times \bbZ_2 \arrow[dd, no head] \arrow[rrdd, no head, bend right] &  & M_1\cap M_3\cong S_3 \arrow[dd, no head] \arrow[ld, no head] &  & B\cong \bbZ_2\times \bbZ_2 \arrow[dd, no head] \arrow[lldd, no head] \\
 & \bbZ_3 \cong\langle g_2 \rangle &  &  &  \\
\bbZ_2\cong\langle g_3 \rangle &  & \bbZ_2\cong\langle w \rangle &  & \bbZ_2\cong\langle g_1 \rangle
\end{tikzcd}

\underline{Case 2:} $p\equiv \pm 1 \mod 10$:\\
We consider two subgroups $M_1$ and $M_3$ isomorphic to $A_5$ such that $M_1\cap M_3\cong D_{10}$. 
Let $w$ be an element of order 2 contained in $M_1\cap M_3$. Then there is only one subgroup $A$ of $M_1$ isomorphic to $\bbZ_2\times \bbZ_2$ and contains $w$. There is also a unique subgroup $B\cong \bbZ_2\times \bbZ_2$ of $M_3$ that contains $w$. We now take as $M_2$ a maximal subgroup of $\PSL(2,p)$ containing the centralizer of $w$ in $\PSL(2,p)$.
More precisely, we take as $M_2$ the only subgroup isomorphic to $D_{p\mp 1}$ (here the order of this dihedral group depends on $p\equiv \pm 1 \mod 4$)$\cite{King}$ which is the centralizer of $w$ and contains both $A$ and $B$. 
It is clear that the maximal subgroups $M_1$, $M_2$, and $M_3$ are in general position and have nontrivial intersection.

We take $g_1\neq w$ an element of order 2 in $B$. We take $1\neq g_3\neq w$ in $A$. Let $g_2$ be an element of order 5 in $M_1\cap M_3$. Then $g_i\in\cap_{j\neq i}M_j \setminus M_i$ and $(g_1,g_2,g_3)$ is an irredundant generating sequence of $\PSL(2,p)$ since it is irredundant by construction and $\langle g_1, g_2\rangle=M_3$, which is a maximal subgroup not containing $g_3$. Similarly, $(g_1,g_2,g_3)$ does not satisfy the replacement property.

\begin{tikzcd}
 &  & M_2 \arrow[lldd, no head, bend right] \arrow[rrdd, no head, bend left] &  &  \\
 & M_1\cong A_5 \arrow[rd, no head] \arrow[ld, no head] &  & M_3\cong A_5 \arrow[ld, no head] \arrow[rd, no head] &  \\
A\cong \bbZ_2\times \bbZ_2 \arrow[dd, no head] \arrow[rrdd, no head, bend right] &  & M_1\cap M_3\cong D_{10} \arrow[dd, no head] \arrow[ld, no head] &  & B\cong \bbZ_2\times \bbZ_2 \arrow[dd, no head] \arrow[lldd, no head] \\
 & \bbZ_5 \cong\langle g_2 \rangle &  &  &  \\
\bbZ_2\cong\langle g_3 \rangle &  & \bbZ_2\cong\langle w \rangle &  & \bbZ_2\cong\langle g_1 \rangle
\end{tikzcd}
\end{proof}

From the last proof it follows that:

\begin{cor}
\label{cor:ord2}
If the replacement property fails for $\PSL(2,p)$, then there is a witness to failure of order 2.
\end{cor}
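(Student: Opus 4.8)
The plan is to read the witness off the construction already carried out in the proof of Proposition \ref{RPfail}, so that this corollary becomes a pure bookkeeping statement. Set $G = \PSL(2,p)$ and $m = m(G)$. First I would observe that if $\RP$ fails for $G$, then Theorem \ref{Theorem1} forces $p \notin \{7, 11, 19, 31\}$ and moreover $p \equiv \pm 1 \mod 8$ or $p \equiv \pm 1 \mod 10$; this is exactly the hypothesis of Proposition \ref{RPfail}, so the argument given there applies verbatim to $G$.

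Next I would invoke Theorem \ref{Jambor}, which gives $m = 3$ for every prime outside $\{7, 11, 19, 31\}$. Hence the triple $(g_1, g_2, g_3)$ produced in either case of the proof of Proposition \ref{RPfail} is an irredundant generating sequence of length exactly $m$, i.e. $(g_1,g_2,g_3) \in \Gamma_m(G)$. That same proof exhibits an element $w$ of order $2$ lying in $M_1 \cap M_2 \cap M_3$ with $\langle \{g_j\}_{j \neq i} \cup \{w\} \rangle \leq M_i \lneq G$ for every $i \in \{1,2,3\}$, so $w$ cannot replace any entry of $(g_1,g_2,g_3)$ to yield a generating set. By the definition of $\W(G)$ this gives $w \in \W(G)$, and $|w| = 2$, which is the assertion.

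The only point requiring a word of care — and it is not really an obstacle — is verifying that the generating sequence constructed in Proposition \ref{RPfail} has length precisely $m(G)$ rather than being a shorter irredundant sequence, since the notion of a witness to failure is defined relative to $\Gamma_{m(G)}(G)$; this is exactly what Jambor's theorem supplies. Beyond that, no new computation is needed: the corollary simply records what the proof of Proposition \ref{RPfail} has already established.
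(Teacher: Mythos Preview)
Your proposal is correct and follows precisely the paper's approach: the paper states only ``From the last proof it follows that:'' for this corollary, and your write-up simply makes explicit the bookkeeping behind that one line (failure of $\RP$ forces the hypotheses of Proposition~\ref{RPfail}, whose construction yields an order-$2$ witness against a length-$m(G)$ sequence via Theorem~\ref{Jambor}). Nothing more is needed.
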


\begin{prop}
\label{prop:ord3}
$\PSL(2,p)$ has a witness to failure of order 3 if and only if $p\equiv \pm 1 \mod 10$.
\end{prop}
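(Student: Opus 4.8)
The plan is to prove both implications. Throughout one may assume $p\notin\{7,11,19,31\}$: for those primes $\PSL(2,p)$ satisfies $\RP$ by Theorem \ref{Theorem1}, so $\W(\PSL(2,p))=\{1\}$ and the statement is vacuous there; for all remaining $p$ one has $m(\PSL(2,p))=3$ by Theorem \ref{Jambor}.

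For the ``only if'' direction, suppose $G=\PSL(2,p)$ has a witness $x$ of order $3$. By Proposition \ref{iffCondition} there is a triple $S=(M_1,M_2,M_3)$ of maximal subgroups in general position, corresponding to an irredundant generating sequence, with $x\in\rad(S)$; assume toward a contradiction that $p\not\equiv\pm1\mod10$. The argument of Corollary \ref{orders} (no triple drawn from $\bbZ_p\rtimes\bbZ_{(p-1)/2},D_{p-1},D_{p+1},A_4$ can be in general position with nontrivial radical) shows some $M_i$ is isomorphic to $S_4$ or $A_5$; since $A_5$ is not maximal here (Theorem \ref{Dickson}), we may take $M_1\cong S_4$, whence $p\equiv\pm1\mod8$. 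Next one analyzes the subgroup lattice of $M_1\cong S_4$: the only subgroups strictly between $\langle x\rangle$ and $S_4$ are the unique $A_4$ and the $S_3=N_{S_4}(\langle x\rangle)$, and both of these are maximal in $S_4$. By general position (Lemma \ref{ChainOfSubs}) the two chains $M_1>M_1\cap M_2>\rad(S)$ and $M_1>M_1\cap M_3>\rad(S)$ are strict and distinct, which forces $\rad(S)=\langle x\rangle$ and $\{M_1\cap M_2,\,M_1\cap M_3\}=\{A_4,S_3\}$; say $M_1\cap M_2\cong A_4$. Among the maximal subgroups listed in Theorem \ref{Dickson}, only copies of $S_4$ can contain a copy of $A_4$ (Borels are metacyclic, dihedral groups have only cyclic and dihedral subgroups, and $M_2$ cannot equal $A_4$ since distinct maximal subgroups are incomparable), so $M_2\cong S_4$. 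Then $M_1\cap M_2\cong A_4$ is normal in both $M_1$ and $M_2$, so Lemma \ref{norms} gives $M_1=M_2$ — contradicting that the entries of a sequence in general position are distinct.

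For the ``if'' direction, suppose $p\equiv\pm1\mod10$; the construction mirrors Proposition \ref{RPfail}. The first step is a counting lemma in the spirit of Lemma \ref{counting}: $G$ contains two distinct subgroups $M_1,M_3\cong A_5$ with $M_1\cap M_3\cong A_4$. Indeed, $G$ has $\tfrac{p(p^2-1)}{60}$ subgroups $\cong A_5$ (Lemma \ref{lemma:king}), each containing five subgroups $\cong A_4$, hence $\tfrac{p(p^2-1)}{12}$ incidences $A_4\subseteq A_5$; but $G$ has only $\tfrac{p(p^2-1)}{24}$ subgroups $\cong A_4$ (see \cite{King}), so by pigeonhole some $A_4$ lies in two distinct copies of $A_5$, and those meet exactly in that $A_4$ since $A_4$ is maximal in $A_5$. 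Set $A_4:=M_1\cap M_3$, let $\langle x\rangle$ be a Sylow $3$-subgroup of it (so $|x|=3$), and put $M_2:=N_G(\langle x\rangle)$; since $x$ is a noncentral semisimple element, $C_G(x)$ is a maximal torus, $\langle x\rangle$ is its unique subgroup of order $3$, and $M_2=N_G(\langle x\rangle)=N_G(C_G(x))\cong D_{p\mp1}$ with $3\mid p\mp1$, a maximal subgroup by Theorem \ref{Dickson}. Since every element inverting $x$ lies in $M_2$, one gets $M_i\cap M_2=N_{M_i}(\langle x\rangle)=:P_i\cong S_3$ for $i=1,3$; as an $S_3$ and an $A_4$ of $A_5$ that share a $\bbZ_3$ generate $A_5$, the equality $P_1=P_3$ would give $M_1=\langle P_1,A_4\rangle=\langle P_3,A_4\rangle=M_3$, so $P_1\neq P_3$. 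From $A_4\not\le D_{p\mp1}$ we get $\rad(S)=A_4\cap M_2=\langle x\rangle$, and $P_1\neq P_3$ gives $P_1\not\subseteq M_3$ and $P_3\not\subseteq M_1$; together with $A_4=M_1\cap M_3\not\subseteq M_2$ this is exactly general position for $S=(M_1,M_2,M_3)$. To finish I would produce the corresponding generating sequence as in Proposition \ref{RPfail}: take involutions $g_1\in P_3\setminus\langle x\rangle$ and $g_3\in P_1\setminus\langle x\rangle$, and an order-$3$ element $g_2\in A_4\setminus\langle x\rangle$ chosen (by a short finite check) so that $\langle g_1,g_2\rangle=M_3$; then $g_i\in\bigcap_{j\neq i}M_j\setminus M_i$, the sequence is irredundant, and $\langle g_1,g_2,g_3\rangle\supseteq\langle M_3,g_3\rangle=G$ since $M_3$ is maximal and $g_3\notin M_3$. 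Hence $x\in\rad(S)\subseteq\W(G)$ has order $3$.

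The step I expect to be the main obstacle is the counting lemma for the ``if'' direction: establishing that $G$ has exactly $\tfrac{p(p^2-1)}{24}$ subgroups isomorphic to $A_4$ — which requires treating the normalizers of Klein four-subgroups separately for $p\equiv\pm1\mod8$ (where they are copies of $S_4$) and $p\equiv\pm3\mod8$ (where they are copies of $A_4$), and verifying the conjugacy facts (for Klein four-subgroups, and for copies of $A_4$ and of $A_5$) needed for the orbit counts — since the entire construction rests on two copies of $A_5$ that intersect in an $A_4$. The remaining work (the $S_4$/$A_5$ subgroup-lattice bookkeeping and the verification that $g_1$ and $g_2$ generate $M_3$) is routine.
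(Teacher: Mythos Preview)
Your proposal is correct and follows essentially the same strategy as the paper: for the ``if'' direction both construct two copies of $A_5$ meeting in an $A_4$, take the dihedral normalizer of a $\bbZ_3$ inside it as the third maximal subgroup, and build the generating sequence exactly as in Proposition~\ref{RPfail}; for the ``only if'' direction both reduce to some $M_i\cong S_4$ and analyze chains above $\langle x\rangle$ in $S_4$. Your ``only if'' argument is in fact more careful than the paper's---the paper asserts directly that $M_1\cap M_2$ and $M_1\cap M_3$ must both equal the unique $S_3$ above $\langle w\rangle$, whereas you explicitly dispose of the possibility $M_1\cap M_j\cong A_4$ via Lemma~\ref{norms}, which fills a small gap---and the counting obstacle you flag (two $A_5$'s sharing an $A_4$) is exactly what the paper sweeps under ``a similar argument to the one used in the previous proof.''
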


\begin{proof}
If $p\equiv \pm 1 \mod 10$, then a similar argument to the one used in the previous proof shows that there exist three maximal subgroups in general position such that $M_1\cong D_{p\mp 1}$ (here the order of this dihedral group depends on $p\equiv \pm 1 \mod 3$), $M_2\cong A_5$, $M_3\cong A_5$, $M_2\cap M_3\cong A_4$, $M_1\cap M_3\cong S_3$,  $M_1\cap M_2\cong S_3$ and $\cap_{i=1}^{3} M_i \cong \bbZ_3$. This allows to construct an irredundant generating sequence $(g_1,g_2,g_3)$ with a witness to failure $w$ of order 3 such that $g_i\in\cap_{j\neq i}M_j \setminus M_i$, $\langle g_1 \rangle\neq\langle w \rangle$ and $\langle g_1, g_3\rangle=M_2$.\\

\begin{tikzcd}
 &  & M_1\cong D_{p\mp1} \arrow[lldd, no head, bend right] \arrow[rrdd, no head, bend left] &  &  \\
 & M_2\cong A_5 \arrow[rd, no head] \arrow[ld, no head] &  & M_3\cong A_5 \arrow[ld, no head] \arrow[rd, no head] &  \\
M_1\cap M_2\cong S_3 \arrow[dd, no head] \arrow[rrdd, no head, bend right] &  & M_2\cap M_3\cong A_4 \arrow[dd, no head] \arrow[ld, no head] &  & M_1\cap M_3\cong S_3 \arrow[dd, no head] \arrow[lldd, no head] \\
 & \bbZ_3 \cong\langle g_1 \rangle &  &  &  \\
\bbZ_2\cong\langle g_3 \rangle &  & \bbZ_3\cong\langle w \rangle &  & \bbZ_2\cong\langle g_2 \rangle
\end{tikzcd}

\hspace{0.5cm}

Conversely, if there is an irredundant generating sequence $(g_1,g_2,g_3)$ with a witness to failure $w$ of order 3 and $p\not\equiv \pm 1 \mod 10$, then one of the maximal subgroups of the corresponding sequence of maximal subgroups in general position must be isomorphic to $S_4$ (without loss of generality, $M_1$). This leads to a contradiction as both $M_1\cap M_2$ and $M_1\cap M_3$ would have to be equal to the only copy of $D_3$ containing $C_3\cong\langle w \rangle$.
\end{proof}

\bigbreak

{\bf Acknowledgement.} The authors are thankful to R. K. Dennis for patiently and carefully teaching us the requisite information needed for this paper and for guiding our inquiries in fruitful directions. Further thanks goes to the math department at Cornell University for hosting the SPUR/REU program and of course to our group mates without whom progress would have been far slower and less enjoyable. The first author is grateful to Mar\'ia Cristina Masaveu Peterson Foundation for their funding. The second author would also like to acknowledge and thank the Science Scholars Program at Temple University for summer funding. The third author would like to thank The Crossing Paths for their traveling grant. 

\bibliography{GSOFGbib}
\bibliographystyle{plain}

\vspace{0.5in}

\noindent\textsc{Department of Mathematics, University of Oviedo\\
Department of Mathematics, Cornell University (2018 SPUR/REU)\\
\emph{E-mail address: }{\bf davidcuetonoval@gmail.com}}
\\
\\
\noindent\textsc{Department of Mathematics, Temple University\\
Department of Mathematics, Cornell University (2018 SPUR/REU)\\
\emph{E-mail address: }{\bf aidanlorenz@gmail.com}}
\\
\\
\noindent\textsc{Department of Mathematics, Bilkent University\\
Department of Mathematics, Cornell University (2018 SPUR/REU)\\}
\includegraphics[scale=0.13]{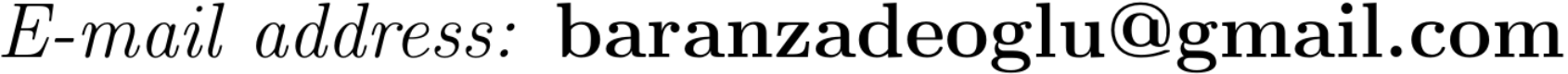}   
\end{document}